\newtheorem{defin}{Definition}[section]
\newtheorem{theorem}[defin]{Theorem}
\newtheorem{proposition}[defin]{Proposition}
\newtheorem{lemma}[defin]{Lemma}
\newtheorem{corollary}[defin]{Corollary}
\newtheorem{definition}[defin]{Definition}
\newtheorem{remark}[defin]{Remark}
\newtheorem{assumption}[defin]{Assumption}
\newtheorem*{maintheorem}{Theorem}
\newcommand{\C}{\mathbb{C}}
\newcommand{\R}{\mathbb{R}}
\newcommand{\Z}{\mathbb{Z}}
\newcommand{\Ecal}{\mathcal{E}}
\newcommand{\Mcal}{\mathcal{M}}
\newcommand{\Scal}{\mathcal{S}}
\newcommand{\lset}{\left\{}
\newcommand{\rset}{\right\}}
\newcommand{\SL}{\operatorname{SL}}
\newcommand{\ort}{\operatorname{O}}
\newcommand{\bigperp}{%
  \mathop{\mathpalette\bigp@rp\relax}%
  \displaylimits
}
\newcommand{\bigp@rp}[2]{%
  \vcenter{
    \m@th\hbox{\scalebox{\ifx#1\displaystyle2.1\else1.5\fi}{$#1\perp$}}
  }%
}
\DeclareMathOperator{\diag}{diag}
\DeclareMathOperator{\Tr}{Tr}
\DeclareMathOperator{\Harm}{Harm}
\DeclareMathOperator{\Eis}{Eis}
\definecolor{arne}{rgb}{0.2,.7,0.9}
\definecolor{aurelio}{rgb}{0.9,.2,0.9}
\definecolor{frank}{rgb}{0.8,0.2,0.5}
\definecolor{marc}{rgb}{1,0.65,0}
\begin{document}

\title{Critical modular lattices in the Gaussian core model}

\address{A.~Joharian, F.~Vallentin,
  M.C.~Zimmermann, Department Mathematik/Informatik, Abteilung
  Mathematik, Universit\"at zu K\"oln, Weyertal~86--90, 50931 K\"oln,
  Germany}

\author{Arian Joharian}

\email{arianjoharian@icloud.com}

\author{Frank Vallentin}
\email{frank.vallentin@uni-koeln.de}

\author{Marc Christian Zimmermann}
\email{marc.christian.zimmermann@gmail.com}

\date{February 13, 2026}

\subjclass{11H55, 52C17}

\begin{abstract}
  We discuss the local analysis of Gaussian potential energy of modular lattices. 
   We present examples of $2$-modular lattices---such as the $16$-dimensional Barnes-Wall lattice---and $3$-modular lattices---such as the $12$-dimensional Coxeter-Todd lattice---that are locally universally optimal among lattices (in the sense of Cohn and Kumar).   We also provide other $2$- and $3$-modular lattices that are not locally universally optimal, or not even critical in the Gaussian core model.
\end{abstract}

\maketitle

\markboth{A. Joharian, F. Vallentin, and
  M.C. Zimmermann}{Critical modular lattices in the Gaussian core model}

\setcounter{tocdepth}{1} 
\tableofcontents

\section{Introduction}

\subsection{The Gaussian core model}

Let $L \subseteq \mathbb{R}^n$ be an $n$-dimensional lattice, i.e. a discrete subgroup of $\R^n$ of full rank.
The \emph{Gaussian potential energy} of $L$ is defined by
\[
  \mathcal{E}(\alpha,L) = \sum_{x \in L \setminus\{0\}} e^{-\alpha \|x\|^2},
\]
for $\alpha >0$.
Point configurations which interact via Gaussian potential functions $r \mapsto e^{-\alpha r^2}$ are referred to as the \emph{Gaussian core model}. 
They are natural physical systems (see Stillinger \cite{Stillinger1976a}) and they are mathematically quite general.
This embeds into the larger context of universal optimality:
Following Cohn and Kumar \cite{Cohn2007}, a point configuration is called \emph{universally optimal} if it minimizes potential energy among all configurations of the same point density for every completely monotonic function of squared distance.
It is an important observation that after fixing a point density, a point configuration is universally optimal if and only if it minimizes Gaussian potential energy for all $\alpha >0$.
This is a consequence of a theorem of Bernstein (see Widder \cite[Theorem 12b, page 161]{Widder1941a}). 
See Cohn, Kumar, Miller, Radchenko and Viazovska \cite{Cohn2019a} for further discussion.\smallskip

In the seminal paper \cite{Cohn2019a} the authors proved that the $E_8$ root lattice in dimension $8$ and the Leech lattice $\Lambda_{24}$ in dimension $24$ are universally optimal point configurations in their respective dimensions.
Further, the authors proved that these lattices are the unique minimizers among all periodic point configurations. \smallskip

In this follow-up paper to \cite{Heimendahl2023} we are interested in the \textit{local analysis} of the function $L \mapsto \mathcal{E}(\alpha, L)$ when $L$ varies in the manifold of rank $n$ lattices having point density $1$, which means that the number of lattice points per unit volume equals $1$. 
Even if considering local universal optimality restricted to lattice configurations, only few examples are known. 
Explicitly, these are, the one dimensional standard lattice $\Z$, the hexagonal lattice $A_2$ (see Montgomery \cite{Montgomery1988}), the root lattice $D_4$, $E_8$ and $\Lambda_{24}$ (see Sarnak and Str\"ombergsson \cite{Sarnak2006a}), where $\Z$, $E_8$, and $\Lambda_{24}$ of course are now known to be universally optimal among all point configurations. The hexagonal lattice is universally optimal among lattices and conjectured to be universally optimal among all point configurations. 
While $D_4$ is only proven to be locally universally optimal among lattices, it is conjectured (based on numerical simulations by Torquato and Stillinger \cite{Torquato2008} and Cohn, Kumar, and Sch\"urmann \cite{Cohn2009}) that $D_4$ is actually universally optimal among all point configurations. 
\smallskip

In \cite{Heimendahl2023} the case of even unimodular lattices was considered. 
Recall that a lattice $L$ is called \emph{unimodular} if it coincides with its dual lattice
\[
  L^\# = \{y \in \R^n : x \cdot y \in \Z \text{ for all } x \in L\},
\]
where $x \cdot y$ denotes the standard inner product of $x, y \in \R^n$.
The lattice $L$ is called \emph{even} if for every lattice vector $x \in L$ the inner product $x \cdot x$ is an even integer. 
It is well-known (see for instance Serre~\cite{Serre1973a}) that in a given dimension the number of non-isometric even unimodular lattices is finite and that they exist only in dimensions which are divisible by $8$.

One of the main goals of \cite{Heimendahl2023}, motivated by Regev and Stephens-Davidowitz~\cite{Regev2016a}, was to find a concrete example of an even unimodular lattice that is a local maximum for some energy parameter $\alpha$ and the associated Gaussian potential.
The first such example in this class of lattices can be found dimension $32$, every extremal even unimodular lattice in dimension $32$ is a local maximum for certain values of $\alpha$, a rigorous proof of this statement was worked out specifically for $\alpha = \pi$.
It was proved that $E_8 \perp E_8$ in dimension $16$ and every Niemeier lattice in dimension $24$ with a decomposable root system are saddle points if $\alpha$ is large enough.
While all even unimodular lattices in dimensions up to $24$ turned out to be critical points in the Gaussian core model, there are non-critical even unimodular lattices in dimension $32$. \smallskip

The aim of this paper is to extend the local analysis to a larger class of lattices, including dimensions in which no even unimodular lattice can exist. 
For this we consider the class of (extremal) modular lattices.
This is in part motivated by an easy observation, which was already mentioned in Sarnak and Str\"ombergsson \cite{Sarnak2006a}:
A lattice (of point density $1$) cannot be universally optimal if the theta series of $L$ and $L^\#$ are different.
It seems noteworthy to mention that $A_2$ and $D_4$, which are locally universally optimal among lattices, are actually $2$-modular lattices, that is $A_2 \cong \sqrt{2}A_2^\#$ and $D_4 \cong \sqrt{2}D_4^\#$.
From the general definition of an $\ell$-modular lattice below, it becomes clear that $\ell$-modular lattices satisfy this necessary condition and therefore seem good candidates to investigate if one is interested in universal optimality.

\subsection{Modular lattices in the Gaussian core model}

Modular lattices generalize unimodular lattices.
This generalization is closely related to the \emph{theta series} of a lattice $L$, which is a function of a variable $\tau$ in the complex upper half plane given by
\[
  \Theta_L(\tau) = \sum_{v \in L} e^{\pi i \tau \|v\|^2}.
\]
If $L$ is an even lattice, $\Theta_L$ can be rewritten in terms of the cardinalities $a_m = |L(2m)|$ of the \emph{shells} $L(2m) = \lset v \in L : \|v\|^2 = 2m \rset$ for $m \in \Z_{\geq 0}$ and the modulus $q = e^{2\pi i \tau}$ as a Fourier-series
\[
  \Theta_L(q) = \sum_{m \geq 0} a_m q^m.
\]
It is well known that if $L$ is even unimodular, then $\Theta_L$ is a modular form for the full modular group $\SL_2(\Z)$.\smallskip

We now move to a larger class of lattices, for which $\Theta_L$ has similar desirable properties.
Let $\ell > 0$.
A \emph{similarity} $\sigma: \R^n \rightarrow \R^n$ of \emph{norm} $\ell$ is a linear map satisfying
\[
  \sigma(v) \cdot \sigma(w) = \ell v \cdot w 
\]
for all $v,w \in \R^n$. 

\begin{definition}
  A lattice is called $\ell$-modular if there exists a \emph{similarity} of \emph{norm} (or \emph{similarity factor}) $\ell$ such that 
  \[
    L = \sigma(L^\#),
  \]
  or, equivalently,
  \[
    L \cong \sqrt{\ell} L^\#.
  \] 
\end{definition}

As first observed by Quebbemann in \cite{Quebbemann1995} (and extended in \cite{Quebbemann1997}) the theta series of an even $\ell$-modular lattice of dimension $n=2k$ is a modular form of weight $k$ for the \emph{Fricke group} $\Gamma_*(\ell)$ and a suitably chosen character.

This connection to modular forms implies an upper bound on the minimal norm of a nonzero lattice vector. 
Modular lattices for which the minimum norm attains this bound are called \emph{extremal}.
For a definition of $\Gamma_*(\ell)$ and details on the character and extremality we refer forward to Section \ref{sec:theta-series}.

We will investigate the local behavior of Gaussian potential energy of modular lattices in terms of the theta series of $L$ using the relation
\[
  \mathcal{E}(\alpha,L) = \sum_{x \in L \setminus\{0\}} e^{-\alpha \|x\|^2} = \Theta_L(\alpha i / \pi) -1.
\]

Note that an $\ell$-modular lattice $L$ is not actually a point on the manifold of lattices of point density $1$, as the point density of an $\ell$-modular lattice is $\ell^{n/4}$ since its determinant is $\ell^{n/2}$.
So we need to rescale $L$ to have point density $1$ and obtain the rescaled lattice $\ell^{-n/4} L$.
In this case, we have the relation
\begin{equation}\label{eq:rescaling:energy}
  \Ecal(\alpha,L) = \Ecal(\ell^{n/2}\alpha,\ell^{-n/4} L).
\end{equation}
While this is important to keep in mind, when comparing actual values of $\Ecal$, it does not matter for the validity of qualitative statements of the form:
There exists some $\alpha$ such that $L$ is a local maximum/minimum or saddle point for energy with respect to the Gaussian potential $r \mapsto e^{-\alpha r^2}$.
As the present paper deals precisely with questions of that nature, we will work with $\ell$-modular lattices directly and not with their rescaled counterparts.

\subsection{Structure of the paper}

We will collect some technical preliminaries in Section \ref{sec:prelim}, including some background on spherical designs, harmonic polynomials, theta series (with spherical coefficients) of modular lattices, and extremal modular lattices.

In Section \ref{sec:Strategy} we explain the basic strategy we employ in our computations. 
While this is similar to the treatment in \cite{Heimendahl2023}, it deviates in some crucial points and certain new aspects in the analysis of the eigenvalues of the Hessian of energy emerge.

The application of this strategy requires bounds on the coefficients of modular forms, which we obtain separately for Eisenstein series and cusp forms. This is the content of Section \ref{sec:bounds}. 

Finally, in Section \ref{sec:results} we bring all of the preparations together and study some examples of extremal $2$ and $3$-modular lattices in detail. 
In principle, one can extend these computations, but we chose to do this only in a limited number of cases, each of which is either concentrating on a prominent lattice or highlights a certain kind of behavior.

\subsection{Main results}

We can summarize our results as follows.
The main technical contribution of the paper is a technique that allows us to compute the eigenvalues of the Hessian of energy with respect to Gaussian potentials, if $L$ is an even $\ell$-modular lattice. This is the content of Theorem \ref{thm:maintheorem}.

Building on this, if $L$ is an extremal even $\ell$-modular lattice, where $\ell$ is prime and $1 + \ell$ divides $24$, we can determine the asymptotic behavior of $L$ in the Gaussian core model, as $\alpha \rightarrow \infty$, under the assumptions made for Theorem \ref{thm:maintheorem}. 
This is the content of Theorem \ref{thm:large-alpha}, where it turns out that, asymptotically, $L$ is either a local minimum or a saddle point, depending on its minimal vectors.

We apply the aforementioned techniques to $2$ and $3$-modular lattices in dimensions up to $20$ and $18$ respectively; these are the dimensions in which the classification of all extremal $2$- and $3$-modular lattices has been completed.
We collect numerical and rigorous results on all of these cases in Tables \ref{tab:numerics:modular:lattices:2} and \ref{tab:numerics:modular:lattices:3}, see Section \ref{sec:numerical} for more information on these tables.

Specifically, while the tables contain a good amount of data which is only numerical evidence, this can be made rigorous.
We illustrate this with an in depth example in Section \ref{sec:2design}:
Here, we show how to deal with lattices for which the shells are spherical $2$-designs but not spherical $4$-designs. 
Then a more careful analysis is necessary, and this becomes rather technical if an associated space of cusp forms is of dimension at least $2$.
This phenomenon occurs for extremal $\ell$-modular lattices, as opposed to the previous study of even unimodular lattices up to dimension $32$ in \cite{Heimendahl2023}.
To illustrate the method, we apply it to one (out of six) extremal $3$-modular lattice in dimension $16$.
Numerical evidence suggests that this lattice is a saddle point for all $\alpha$.
We provide a rigorous proof of this for $\alpha = 1$, to support this observation, and to illustrate the method.
This is collected in Proposition \ref{prop:L16}.

\smallskip

In addition to the above, we discuss some cases where the shells of the lattices form stronger designs, namely at least $4$-designs.
This includes the $3$-modular Coxeter-Todd lattice in dimension $12$ and the $2$-modular Barnes-Wall lattice in dimension $16$.

The main idea is to utilize that these lattices possess a lot of structure that simplifies the computation of the eigenvalues of the Hessian of $\Ecal(\alpha,L)$ significantly; this comes from the fact that the shells of these lattices form spherical $4$-designs.
Recall that if the first shell, i.e. the shell associated to minimal vectors of the lattice, forms a spherical $4$-design, then the lattice is called \emph{strongly perfect}, a notion by Venkov~\cite{Venkov2001a}. It is known that strongly perfect lattices are \emph{extreme} in the geometric sense, i.e. they are strict local maxima of the sphere packing density function for lattices.
By a classical result of Voronoi \cite{Voronoi1907} a lattice is extreme if and only if it is perfect and eutactic.
Here a lattice $L$ is \emph{perfect} if the span of the rank one matrices $x x^{\sf{T}}$, where $x$ runs through the set of minimal vectors of $L$, is equal to the space of symmetric matrices; $L$ is \emph{eutactic} if the identity matrix is in the interior of the convex cone spanned by the rank one matrices $x x^{\sf{T}}$ from before.

We show that both lattices are locally universally optimal (see Propositions \ref{prop:CT} and \ref{prop:BW}).
This seems plausible as both lattices are the best known sphere packings in their respective dimension and the energy minimization problem converges to the sphere packing problem for $\alpha \rightarrow \infty$. 

A direct extension of the proof of local universal optimality among lattices of these two well-known lattices, results in a criterion that asserts local universal optimality for other extremal $\ell$-modular lattices.

\begin{maintheorem}
  Let $L$ be an extremal $\ell$-modular lattice for which all shells are $4$-designs. Then, $L$ is locally universally optimal among lattices if 
  $\ell \geq  3$, or if $\ell = 2$ and $\dim(L) \in \lset 4, 16, 20, 32, 48 \rset$.
\end{maintheorem}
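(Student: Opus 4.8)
The plan is to reduce local universal optimality among lattices to a single scalar inequality and then to exploit the Fricke symmetry of the theta series. First I would invoke Theorem~\ref{thm:maintheorem}: since every shell $L(2m)$ is a spherical $4$-design, both the trace-form $v\mapsto v^{\tp}S^2v$ and the square $v\mapsto(v^{\tp}Sv)^2$ average over each shell to expressions depending only on $\Tr(S^2)$ in the trace-zero symmetric deformation $S$ (the first needs only a $2$-design, the second a $4$-design). Hence the Hessian of $\Ecal(\alpha,\cdot)$ on the density-$1$ manifold is a scalar multiple of the identity, and its single eigenvalue is a positive multiple of
\[
  \Phi(\alpha)=\sum_{m\geq 1}a_m\,(2m)\bigl(4m\alpha-(n+2)\bigr)e^{-2\alpha m},
  \qquad a_m=|L(2m)|,\ n=\dim L.
\]
Because the shells are in particular $2$-designs, $L$ is critical for every $\alpha$, so local universal optimality among lattices is equivalent to the strict inequality $\Phi(\alpha)>0$ for all $\alpha>0$.

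Next I would bring in $\ell$-modularity. Poisson summation together with $L=\sigma(L^\#)$ and $\det L=\ell^{n/2}$ yields the functional equation
\[
  \sum_{v\in L}e^{-\alpha\|v\|^2}
  =\frac{\pi^{n/2}}{\ell^{n/4}}\,\alpha^{-n/2}\sum_{v\in L}e^{-\frac{\pi^2}{\ell\alpha}\|v\|^2},
\]
whose fixed point is $\alpha^\ast=\pi/\sqrt{\ell}$. Writing $\alpha=\alpha^\ast e^{s}$, the normalized value $\Psi(s)=\alpha^{n/4}\sum_{v\in L}e^{-\alpha\|v\|^2}$ is then an even, positive function of $s$, and a short computation shows $\alpha^{\,n/4+1}\Phi(\alpha)=2\ddot\Psi(s)-\tfrac{n^2}{8}\Psi(s)$, so that $\Phi(\alpha)>0$ is the symmetric condition $\ddot\Psi>\tfrac{n^2}{16}\Psi$. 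In particular $\Phi(\alpha)$ and $\Phi(\pi^2/(\ell\alpha))$ have the same sign, and it suffices to prove $\Phi(\alpha)>0$ on the half-line $\alpha\geq\alpha^\ast$. There the summand of index $m$ is nonnegative as soon as $4m\alpha\geq n+2$, which holds for every shell once $2\mu\,\pi/\sqrt{\ell}\geq n+2$, where $\mu$ is the minimal norm. Thus whenever the extremal minimum is large enough, namely $\mu\geq(n+2)\sqrt{\ell}/(2\pi)$, every summand is nonnegative on $[\alpha^\ast,\infty)$ and $\Phi>0$ is immediate (this already settles, e.g., $D_4$ and the Coxeter--Todd lattice $K_{12}$).

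The main obstacle is the regime where this margin just fails, i.e.\ where $\alpha^\ast$ lies slightly below $(n+2)/(2\mu)$, so that on the compact interval $[\alpha^\ast,(n+2)/(2\mu))$ the single minimal shell contributes a small negative amount while all higher shells are already positive. Here I would bound the minimal contribution from above through the (uniquely determined) extremal theta series and dominate it by the positive contribution of the next shell, using the coefficient estimates of Section~\ref{sec:bounds}: the Eisenstein part provides a lower bound of order $m^{n/2-1}$ for $a_m$ with $m>m_1$, while the cusp part is a genuine error term. Since the problematic interval is anchored at $\alpha^\ast=\pi/\sqrt{\ell}$, its location together with the size of the extremal minimum governs the comparison: for $\ell\geq 3$ it always succeeds and $\Phi>0$ follows, whereas for $\ell=2$ the balance is delicate and a case analysis of the determined theta series shows it holds precisely in dimensions $4,16,20,32,48$ (the Barnes--Wall lattice $\mathrm{BW}_{16}$ sits just inside this compensation regime). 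Carrying out this last comparison uniformly is the technically hardest step, and it mirrors the explicit estimates of Propositions~\ref{prop:CT} and~\ref{prop:BW}.
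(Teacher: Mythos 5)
Your first three steps are correct, and I verified the key identity: with $\Psi(s)=\alpha^{n/4}\sum_{v\in L}e^{-\alpha\|v\|^2}$, $\alpha=(\pi/\sqrt{\ell})e^{s}$, one indeed gets $\alpha^{n/4+1}\Phi(\alpha)=2\ddot\Psi(s)-\tfrac{n^2}{8}\Psi(s)$ with $\Psi$ even, so $\Phi(\alpha)$ and $\Phi(\pi^2/(\ell\alpha))$ have the same sign and it suffices to work on $[\pi/\sqrt{\ell},\infty)$. This is a cleaner and in fact sharper treatment of the duality step than the paper's: the paper (here and in Propositions \ref{prop:CT} and \ref{prop:BW}) proves termwise positivity of the eigenvalue of Lemma \ref{lem:Hessian:4design} for $\alpha\geq\pi$ and then appeals to Poisson summation for $\alpha<\pi$; since the sign symmetry pairs $\alpha$ with $\pi^2/(\ell\alpha)$, positivity on $[\pi,\infty)$ transfers only to $(0,\pi/\ell]$, so your normalization at the true fixed point $\pi/\sqrt{\ell}$ is the right bookkeeping, and the window it isolates is a genuine issue that the paper's one-line reduction passes over.

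The genuine gap is your last paragraph. Reducing to $[\pi/\sqrt{\ell},\infty)$ makes the termwise argument much weaker than the paper's threshold-$\pi$ version: it requires $4\pi m_0\geq (n+2)\sqrt{\ell}$ (your margin condition), which among the lattices covered by Theorem \ref{thm:ellmodular:loc} holds essentially only for $A_2$, $D_4$ and $K_{12}$. It fails for $\operatorname{BW}_{16}$ ($4\sqrt{2}\pi\approx 17.77<18$), for the $2$-modular lattices in dimensions $20$, $32$, $48$, for the $3$-modular lattice in dimension $14$, and, since $j(\ell)=48/(\ell+1)>4\pi/\sqrt{\ell}$ for $\ell\in\{3,5,7,11\}$, for infinitely many dimensions at each of these levels. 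For all of these cases your proof consists of the assertion that the negative minimal-shell contribution is compensated, and that this "always succeeds for $\ell\geq3$" and for $\ell=2$ "precisely in dimensions $4,16,20,32,48$" --- but that assertion \emph{is} the theorem; no comparison is actually carried out. Moreover, the sketch is not routine to complete as described: on $[\pi/\sqrt{\ell},(n+2)/(2\mu))$ it need not be only the minimal shell that is negative (for $\ell=2$, $n=48$ the shells $m=4$ and $m=5$ both are at $\alpha^\ast$), and Section \ref{sec:bounds} supplies only \emph{upper} bounds on Eisenstein coefficients, so the lower bound $a_m\geq c\,m^{n/2-1}$ you invoke (which must be uniform in $n$, as infinitely many dimensions are claimed for $\ell\geq3$) would have to be established separately, including dominance over the cusp part. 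By contrast, the paper closes the case analysis with no shell comparison at all, using $n\leq j(\ell)m_0-2$ and $j(\ell)<4\pi\iff\ell\geq3$ at the threshold $\alpha=\pi$, at the price of the cruder Poisson reduction. As written, your proposal proves the statement only for the few lattices passing the margin test; the compensation step you correctly identify as the technical heart is left open.
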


This theorem (Theorem~\ref{thm:ellmodular:loc}) is proven in Section \ref{sec:loc:univ} and Table \ref{tab:loc:univ} collects all extremal $\ell$-modular lattices up to dimension $20$.

\smallskip

We complement the analysis in the presence of $2$-designs with a short proof (see Section \ref{sec:noncritical}) that there exists an extremal $2$-modular lattice in dimension $12$ which is not a universal critical point in the Gaussian core model.

What is different here is that the $32$-dimensional example came from a non-extremal lattice, so a lattice which cannot be optimal for sphere packing, while the new example is extremal.
However, all $2$-modular lattices in dimension $12$ fail to be extreme (in the geometric sense), as they are neither perfect nor eutactic, which we checked by computer.
In any case it is clear that the new $2$-modular example is not an optimal sphere packing, as it packs less dense than the $3$-modular Coxeter-Todd lattice.

We briefly mention that being non-critical is not a property of all $2$-modular lattices in dimension $12$.
Up to isometry there are $3$ such, and $2$ of them are non-critical, while one of them is. 
See the discussion in Section \ref{sec:noncritical} and Section \ref{sec:numerical}.

\section{Preliminaries} \label{sec:prelim}

\subsection{Spherical designs}
We reproduce this discussion from \cite{Heimendahl2023}, to keep the manuscript self-contained.

A finite set $X$ on the sphere of radius $r$ in $\R^n$ denoted by $S^{n-1}(r)$ is called a spherical $t$-design if
\[
  \int_{S^{n-1}(r)} p(x) \, dx = \frac{1}{|X|} \sum_{x \in X} p(x)
\]
holds for every polynomial $p$ of degree up to $t$. Here we integrate with respect to the rotationally invariant probability measure on $S^{n-1}(r)$.

If $X$ forms a spherical $2$-design, then 
\begin{equation}
\label{eq:spherical-2-design}
  \sum_{x \in X} xx^{\sf T} = \frac{r^2 |X|}{n} I_n
\end{equation}
holds, where $I_n$ denotes the identity matrix with $n$ rows/columns.

A polynomial $p \in \R[x_1, \ldots, x_n]$ is called harmonic if it vanishes under the Laplace operator
\[
  \Delta p = \sum_{i=1}^n \frac{\partial^2 p}{\partial x_i^2} = 0.
\]
We denote the space of homogeneous harmonic polynomials of degree $k$ by $\Harm_k$. One can uniquely decompose every homogeneous polynomial $p$ of even degree $k$
\begin{equation}
\label{eq:harmonic-decomposition}
p(x) = p_k(x) + \|x\|^2 p_{k-2}(x) + \|x\|^4 p_{k-4}(x) + \cdots +
\|x\|^k p_0(x)
\end{equation}
with $p_d \in \Harm_d$ and $d = 0, 2, \ldots, k$.

We can characterize that $X$ is a spherical $t$-design by saying that the sum $\sum_{x \in X} p(x)$ vanishes for all homogeneous harmonic polynomials $p$ of degree $1, \ldots, t$.

\smallskip

In the following we shall need the following technical lemma.

\begin{lemma}[Lemma 2.1 in \cite{Heimendahl2023}]
\label{lemma:polynomial-p-H}
Let $H$ be a symmetric matrix with trace zero. The homogeneous polynomial
\[
p_H(x) = (x^{\sf T} H x)^2 = H[x]^2
\]
of degree four decomposes as in \eqref{eq:harmonic-decomposition}
\[
  p_H(x) = p_{H,4}(x) + \|x\|^2 p_{H,2}(x) + \|x\|^4 p_{H,0}(x)
\]
with $p_{H,d} \in \Harm_d$ and
\[
  p_{H,4}(x) = p_H(x) - \|x\|^2 \frac{4}{4+n} H^2[x] + \|x\|^4 \frac{2}{(4+n)(2+n)}
  \Tr H^2
\]
and
\[
  p_{H,0}(x) = \frac{2}{(2+n)n} \Tr H^2.
\]
\end{lemma}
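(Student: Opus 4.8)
The plan is to compute the harmonic decomposition of $p_H(x) = (x^{\sf T} H x)^2$ directly, using the defining property of harmonicity ($\Delta p = 0$) together with the trace-zero hypothesis on $H$, and to exploit the fact that the decomposition \eqref{eq:harmonic-decomposition} is unique. Since $p_H$ is homogeneous of degree four, I know a priori that it splits as
\[
  p_H(x) = p_{H,4}(x) + \|x\|^2 p_{H,2}(x) + \|x\|^4 p_{H,0}(x)
\]
with $p_{H,d} \in \Harm_d$, so the task reduces to identifying the three pieces. My strategy is to make an ansatz of the form $p_{H,4}(x) = p_H(x) - \|x\|^2 q(x) + c\,\|x\|^4$, where $q$ is a (harmonic) quadratic to be determined and $c$ a constant, and then to impose $\Delta p_{H,4} = 0$ to pin down $q$ and $c$.

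First I would record the relevant second-order differential data. A direct computation gives $\Delta\bigl((x^{\sf T} H x)^2\bigr) = 8\,H^2[x] + 4\,(\Tr H)\,(x^{\sf T} H x)$, which, using $\Tr H = 0$, simplifies to $\Delta p_H = 8\,H^2[x]$. I would also use the standard identities $\Delta(\|x\|^2 q(x)) = (2n + 4\deg q)\,q(x) + \|x\|^2 \Delta q(x)$ for homogeneous $q$ of the relevant degree, and $\Delta(\|x\|^4) = (4n + 8)\|x\|^2$. The natural guess is $q(x) = \tfrac{4}{4+n} H^2[x]$, since $H^2[x]$ is the quadratic whose Laplacian will cancel $\Delta p_H$ up to a multiple of $\|x\|^2$. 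Applying the degree-two identity to $\|x\|^2 H^2[x]$ (here $\Delta(H^2[x]) = 2\Tr H^2$) produces terms proportional to $H^2[x]$, to $\|x\|^2$, and the constant $\Tr H^2$; setting the coefficient of $H^2[x]$ in $\Delta p_{H,4}$ to zero fixes the constant $\tfrac{4}{4+n}$, and then setting the remaining $\|x\|^2$-coefficient to zero determines $c = \tfrac{2}{(4+n)(2+n)}\Tr H^2$. This reproduces the stated formula for $p_{H,4}$.

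Having $p_{H,4}$ in hand, I would extract $p_{H,0}$ most cleanly by evaluating the trace of the Laplacian twice, or equivalently by comparing the $\|x\|^4$-coefficients after a second application of $\Delta$ to the full decomposition. Concretely, applying $\Delta^2$ to \eqref{eq:harmonic-decomposition} kills $p_{H,4}$ and $\|x\|^2 p_{H,2}$ (the latter because $p_{H,2}$ is harmonic of degree two, so $\|x\|^2 p_{H,2}$ is harmonic of degree four) and leaves only the contribution of $\|x\|^4 p_{H,0}$. Since $\Delta^2(\|x\|^4) = 8n(n+2)$ and $\Delta^2 p_H = \Delta(8 H^2[x]) = 16\,\Tr H^2$, equating gives $8n(n+2)\,p_{H,0} = 16\,\Tr H^2$, hence $p_{H,0} = \tfrac{2}{(2+n)n}\Tr H^2$, as claimed. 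The only mild obstacle here is bookkeeping: one must correctly track the degree-dependent constants in the Laplacian identities and verify that $\|x\|^2 p_{H,2}$ genuinely lies in the kernel of $\Delta^2$, but once the formulas $\Delta(\|x\|^2 g) = (2n+4k)g + \|x\|^2\Delta g$ are set up carefully, everything reduces to routine algebra, and uniqueness of the decomposition guarantees that these are the correct components.
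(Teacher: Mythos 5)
The paper itself offers no proof of this lemma---it is imported verbatim as Lemma~2.1 of \cite{Heimendahl2023}---so your argument can only be judged on its own merits. On those merits the strategy is sound and the computations check out: $\Delta p_H = 8H^2[x] + 4(\Tr H)H[x] = 8H^2[x]$ under the trace-zero hypothesis; with the ansatz $p_{H,4} = p_H - a\|x\|^2 H^2[x] + c\|x\|^4$, your identity $\Delta(\|x\|^2 q) = (2n+4\deg q)q + \|x\|^2\Delta q$ together with $\Delta(H^2[x]) = 2\Tr H^2$ gives
\[
\Delta p_{H,4} = \bigl(8 - a(2n+8)\bigr)H^2[x] + \bigl(c(4n+8) - 2a\Tr H^2\bigr)\|x\|^2,
\]
which indeed vanishes for $a = \tfrac{4}{4+n}$ and $c = \tfrac{2}{(4+n)(2+n)}\Tr H^2$; and the biharmonic comparison $\Delta^2 p_H = 16\Tr H^2 = 8n(n+2)\,p_{H,0}$ yields the stated $p_{H,0}$. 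The appeal to uniqueness of the decomposition \eqref{eq:harmonic-decomposition} then correctly identifies these as the components.

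However, one justification is wrong as written: you claim that $\|x\|^2 p_{H,2}$ is \emph{harmonic} of degree four. It is not. By your own identity, $\Delta(\|x\|^2 p_{H,2}) = (2n+8)\,p_{H,2}$, which is nonzero whenever $p_{H,2}\neq 0$; and here $p_{H,2} = \tfrac{4}{4+n}\bigl(H^2[x] - \tfrac{\Tr H^2}{n}\|x\|^2\bigr)$ is generically nonzero (multiplication by $\|x\|^2$ never turns a nonzero polynomial into a harmonic one---this is exactly what makes the decomposition unique). What is true, and is all you need, is that $\|x\|^2 p_{H,2}$ lies in the kernel of $\Delta^2$: one application of $\Delta$ produces the scalar multiple $(2n+8)p_{H,2}$ of a harmonic polynomial, so the second application annihilates it. The same confusion appears in your parenthetical description of the ansatz, where you call $q = \tfrac{4}{4+n}H^2[x]$ a ``(harmonic) quadratic''; it is not harmonic, since $\Delta(H^2[x]) = 2\Tr H^2 \neq 0$ in general (only its traceless part is). Both slips are repaired by one line using identities you already set up, so the proof stands after this correction, but as stated the harmonicity claim is false.
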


\subsection{Theta series as modular forms}\label{sec:theta-series}

As noted before, if $L$ is an even $\ell$-modular lattice, its theta series can be rewritten as a Fourier-series in $q = e^{2\pi i \tau}$, namely
\[
  \Theta_L(q) = \sum_{m \geq 0} a_m q^m
\]
where for $m \in \Z_{\geq 0}$ the coefficient $a_m = |L(2m)|$ is the cardinality of the shell $L(2m) = \lset v \in L : \|v\|^2 = 2m \rset$.

If $L$ is an even unimodular lattice of dimension $n=2k$, the theta series $\Theta_L$ is a modular form of weight $k$ for the full modular group $\SL_2(\Z)$.

This is no longer true for arbitrary even $\ell$-modular lattices.
So we consider the congruence subgroup
\[
  \Gamma_0(\ell) = \lset \begin{pmatrix} a & b \\ c & d \end{pmatrix} \in \SL_2(\Z) : c = 0 \bmod \ell \rset  
\]
and the character $\chi$ on $\Gamma_0(\ell)$ given in terms of the Kronecker symbol
\begin{equation} \label{eq:chi:1}
  \chi(A) = \left( \frac{(-\ell)^k}{d} \right) \quad \text{for } A = \begin{pmatrix} a & b \\ c & d \end{pmatrix} \in \Gamma_0(\ell).
\end{equation}
Then $\Theta_L$ is a modular form for the group $\Gamma_0(\ell)$ and $\chi$.
Moreover, if $p$ is a harmonic polynomial of degree $d$ this is also true for the \emph{theta series with spherical coefficients}
\[
  \Theta_{L,p}(q) = \sum_{x \in L} p(x) q^{\tfrac{1}{2}\|x\|^2}.
\]
To be more precise, it turns out that $\Theta_{L,p}$ is a cusp form of weight $n/2 + \deg(p)$ if $\deg(p) \geq 1$.
For a proof of these statements we refer to \cite[Theorem 3.2]{Ebeling1994a}.

However, for a direct application the space of modular forms $\Mcal(\Gamma_0(\ell),\chi)$ is often too large and better replaced by a smaller space with more (and easier to use) structure.

We give some detail for the case $\ell$ being a prime number.
Then the normalizer of $\Gamma_0(\ell)$ in $\SL_2(\R)$ contains a special involution $W_\ell$, the so-called \emph{Fricke involution}, given by
\[
  W_\ell = \begin{pmatrix} 0 & 1/\sqrt{\ell} \\ -\sqrt{\ell} & 0 \end{pmatrix}.
\]
With this we can extend $\Gamma_0(\ell)$ to the \emph{Fricke group}
\[
  \Gamma_*(\ell) = \Gamma_0(\ell) \cup \Gamma_0(\ell) W_\ell.
\]
To work with theta series of $\ell$-modular lattices of dimension $n = 2k$ we can now extend the character $\chi$ from \eqref{eq:chi:1} to obtain a character on $\Gamma_*(\ell)$ by setting
\[
  \chi(W_\ell) = i^k.
\]
Then if $L$ is an even $\ell$-modular lattice of dimension $n = 2k$ we finally obtain that $\Theta_L$ is a modular form of weight $k$ for $\Gamma_*(\ell)$ and the character $\chi$, in notation $\Theta_L \in \Mcal_k(\Gamma_*(\ell),\chi)$.

What is especially nice about $\Mcal_k(\Gamma_*(\ell),\chi)$ and its associated subspace of cusp forms $\Scal_k(\Gamma_*(\ell),\chi)$ is the following.
For $\ell$ such that $1+\ell$ divides $24$ we have an identification of algebras
\begin{equation} \label{eq:starspace:polynomialalgebra}
  \Mcal(\Gamma_*(\ell),\chi) = \bigoplus_{k \geq 0} \Mcal_k(\Gamma_*(\ell),\chi) = \C[\Theta,\Delta_{\ell}], 
\end{equation}
where $\Theta$ is the theta series of any even $\ell$-modular lattice of lowest possible dimension and $\Delta_{\ell}(\tau) = \left(\eta(\tau) \eta(\ell \tau) \right)^{24/(1 + \ell)}$, where $\eta$ is the Dedekind eta function.
Here, the list of levels $\ell$ with $\ell +1 \div 24$ is given by the primes $2,3,5,7,11,23$. Note that a similar algebra decomposition exists for a slightly larger class of levels, in particular the full level $\ell =1$, and the composite levels $6,14,15$ (see \cite{Quebbemann1997}).

Another important property of $\Mcal_k(\Gamma_*(\ell),\chi)$ for $\ell$ such that $1+\ell$ divides $24$ is that it is a space of modular forms for which \emph{extremality is definable}.
That means that the projection 
\begin{equation} \label{eq:extremality}
  \Mcal_k(\Gamma_*(\ell),\chi) \rightarrow \C^d; f = \sum_{m \geq 0} a_m q^m \mapsto (a_0,\ldots,a_{d-1})
\end{equation}
onto the first $d = \dim(\Mcal(\Gamma_*(\ell),\chi))$ coefficients of the Fourier expansion of an element $f \in \Mcal(\Gamma_*(\ell),\chi)$ is injective.
This means that for $\Mcal_k(\Gamma_*(\ell),\chi)$ there exists a unique element $F$ with Fourier expansion 
\[
  F = 1 + \sum_{m \geq d} a_m q^m.
\]
This element is called the \emph{extremal modular form} in $\Mcal(\Gamma_*(\ell),\chi)$.

If $L$ is an even $\ell$-modular lattice of dimension $n = 2k$ such that $\Theta_L$ is the extremal modular form in $\Mcal_k(\Gamma_*(\ell),\chi)$, then $L$ is called an \emph{extremal $\ell$-modular lattice}.
Note that such a lattice, if it exists, which is still an open question in many cases, needs not be unique. 
For an overview of what is known we refer to \cite{Nebe}.

\subsection{Strongly modular lattices}
The considerations of the previous section apply to a slightly more general class of lattices, so called strongly modular lattices.
While we do not conduct explicit computations for such lattices in the remainder of the paper, the techniques introduced here after carry over to this class.

For the definition consider the quotient $L^\# / L$, the discriminant group of $L$, and the exponent $\ell$ of this group, i.e.\ the smallest positive integer $\ell$ such that $\ell(v+L) = L$ for all $v \in L^\#$.
A lattice $L$ is called \emph{strongly $\ell$-modular} if for all divisors $m$ of $\ell$, for which $m,\ell/m$ are co-prime, we have
\[
  L \cong \sqrt{m} \left( \frac{1}{m}L \cap L^\# \right).
\] 
Note that if $\ell$ is prime, this is the same as $L$ being $\ell$-modular.

The theta series of strongly $\ell$-modular lattices a modular form invariant under the character $\chi$ from above and an eigenform for all Atkin-Lehner involutions associated to divisors $m$ of $\ell$, for which $m,\ell/m$ are co-prime.
If $\ell$ is a prime, there is one such involution, the Fricke involution.

For $\ell$ such that the sum of divisors $\sigma_1(\ell)$ of $\ell$ divides $24$, Quebbemann \cite{Quebbemann1997} showed that the resulting space of modular forms has similar properties to $\Mcal_k(\Gamma_*(\ell),\chi)$ as above, in particular there is an analogue of \eqref{eq:starspace:polynomialalgebra}.
For an in depth discussion of strongly modular lattices and explicit information on the associated space of modular forms we refer to \cite{Quebbemann1997} and \cite{Scharlau1999}.

\subsection{Extremal modular lattices and spherical designs} \label{sec:spherical:shells}

There are two known, general ways to establish a minimum design strength on the shells of extremal $\ell$-modular lattices.

Firstly, Bachoc and Venkov \cite{Bachoc2001} provide such bounds specifically for  extremal $\ell$-modular lattices, when $\ell$ prime and $1+\ell$ divides $24$.
This is a generalization of Venkov's classical result regarding unimodular lattices \cite{Venkov2001a}, it can be found as Corollary 4.1 in \cite{Bachoc2001}.
We mainly use this for extremal $2$ and $3$-modular lattices.

In these cases their results read as follows:
If $L$ is an extremal $2$-modular lattice and $L(2m)$ is non-empty, then $L(2m)$ is a spherical $t$-design with
\begin{equation} \label{eq:shell:designs:2}
  t = 
  \begin{cases}
    7 & \text{if } n = 0 \mod 16,\\
    5 & \text{if } n = 4 \mod 16,\\
    3 & \text{if } n = 8 \mod 16.
  \end{cases}
\end{equation}
If $L$ is an extremal $3$-modular lattice and $L(2m)$ is non-empty, then $L(2m)$ is a spherical $t$-design with
\begin{equation} \label{eq:shell:designs:3}
  t = 
  \begin{cases}
    5 & \text{if } n = 0,2 \bmod 12,\\
    3 & \text{if } n = 4,6 \bmod 12.
  \end{cases}
\end{equation}

In all dimensions up to $18$ ($3$-modular) and $20$ ($2$-modular) all such lattices have been classified and additional data on them is readily available through the ``Catalogue of lattices'' \cite{Nebe}. We will collect some of this information in Section \ref{sec:2-3-modular}.

\smallskip

A second way, which works for arbitrary lattices, is based on a result of Goethals and Seidel \cite{Goethals1981}; we refer to this paper for all information exceeding the brief summary below.
Let $L$ be a lattice, and consider the \emph{harmonic Molien series} of $\ort(L)$, the orthogonal group of $L$, which contains information about harmonic invariants of $\ort(L)$:
The $i$th coefficient of the harmonic Molien series of $\ort(L)$ equals the dimension of the space of harmonic invariants of degree $i$ of $\ort(L)$.
Therefore, one can check that there are no $\ort(L)$-invariant harmonic polynomials of degrees $1,\ldots,t$ for the isometry group $\ort(L)$ of $L$, by computing the series and checking its coefficients; in practice we use the computer algebra system MAGMA \cite{MAGMA} to do this for a specific lattice.
This is helpful, since Theorem 3.12 in \cite{Goethals1981} asserts that all $\ort(L)$-orbits are spherical $t$-designs if and only if there are no harmonic invariants of degree $1,\ldots,t$, this applies, in particular, to all shells of $L$, as they clearly are unions of orbits of $\ort(L)$.

Note that this only provides a lower bound on the design strength of the shells of the lattice, as it considers \emph{all} $\ort(L)$-orbits, in fact one can find an example for this among the $2$-modular lattices in dimension $20$. 
There are $3$ such, and by the result of Bachoc and Venkov, all of their shells are at least $5$-designs. 
For two of these lattices this is also a consequence of the approach using the harmonic Molien series, but for one of them (the second entry in the file ``\texttt{2\_dim20.dat}'' in \cite{Nebe}) the latter approach only yields a minimal strength of $3$, as there are harmonic invariants of degree $4$ for this specific lattice and its orthogonal group.

\section{Strategy}
\label{sec:Strategy}

We reproduce some of the discussion from \cite{Heimendahl2023}, but we will have to extend the approach to be able to handle some cases of lattices whose shells are not $4$-designs.

Our main goal is to evaluate the gradient and Hessian of $L \mapsto \mathcal{E}(\alpha, L)$ on the manifold of lattices of fixed point density; recall the relation~\eqref{eq:rescaling:energy} and the associated discussion. 

The gradient and the Hessian of $\mathcal{E}(\alpha,L)$ at $L$ were computed by Coulangeon and Sch\"urmann \cite[Lemma 3.2]{Coulangeon2012a}, and we quickly collect their result here. 
Let $H$ be a symmetric matrix having trace zero (lying in the tangent space of the identity matrix). 
We use the notation $H[x] = x^{\sf T} H x$, and we equip the space of symmetric matrices $\mathcal{S}^n$ with the inner product $\langle A, B \rangle = \Tr(AB)$, where $A, B \in \mathcal{S}^n$. 
The gradient is given by
\begin{equation}
\label{eq:gradient}
\langle  \nabla  \mathcal{E}(\alpha,L), H \rangle = -\alpha \sum_{x \in L \setminus \{0\}} H[x] e^{-\alpha \|x\|^2},
\end{equation}
while the Hessian is the quadratic form
\begin{equation}
\label{eq:Hessian}
\nabla^2 \mathcal{E}(\alpha,L)[H] = \alpha \sum_{x \in L\setminus\{0\}}
e^{-\alpha\|x\|^2} \left(\frac{\alpha}{2} H[x]^2 - \frac{1}{2}H^2[x]\right).
\end{equation}

In this section we will show that for a (modular) lattice $L$ the gradient \eqref{eq:gradient} vanishes if all shells form spherical $2$-designs, and we will compute the eigenvalues of the Hessian \eqref{eq:Hessian} in terms of the coefficients of the theta series of $L$.
This is summarized in Lemma \ref{lem:Hessian:4design} if all shells form spherical $4$-designs and more generally in Theorem \ref{thm:maintheorem} if all shells form at least spherical $2$-designs.

\subsection{The gradient in the presence of $2$-designs}

Now a sufficient condition for $L$ being a critical point is that all shells of $L$ form spherical $2$-designs. 
Indeed, we group the sum in~\eqref{eq:gradient} according to shells, giving
\[
\langle  \nabla  \mathcal{E}(\alpha,L), H \rangle =  -\alpha \sum_{r > 0} e^{-\alpha r^2} \sum_{x \in L(r^2)} H[x].
\]
Then for $r > 0$ every summand
\[
  \sum_{x \in L(r^2)} H[x]
  = \left\langle H, \sum_{x \in L(r^2)} x x^{\sf T}
   \right\rangle =  \frac{r^2 |X|}{n} \Tr(H) = 0
\]
vanishes because of~\eqref{eq:spherical-2-design} and because $H$ is traceless. 
Hence, $L$ is critical.

\subsection{The Hessian in the presence of $4$-designs}
We rewrite the expression \eqref{eq:Hessian} by grouping the sum according to shells, which gives
\begin{equation}
  \nabla^2 \mathcal{E}(\alpha,L)[H] = \alpha \sum_{r > 0} e^{-\alpha
    r^2} \sum_{x \in L(r^2)} \left(\frac{\alpha}{2} H[x]^2 - \frac{1}{2}H^2[x]\right).
\end{equation}
We now strive to evaluate the two sums
\begin{equation}
  \label{eq:two-sums}
  \sum_{x \in L(r^2)} H[x]^2 \quad \text{and} \quad \sum_{x \in
    L(r^2)} H^2[x].
\end{equation}
The second sum is easy to compute when $L(r^2)$ forms a spherical $2$-design. 
In this case we have by~\eqref{eq:spherical-2-design}
\begin{equation}
\label{eq:spherical-2-design-H-squared}
  \sum_{x \in L(r^2)} H^2[x] = \left\langle H^2, \sum_{x \in L(r^2)}
      xx^{\sf T} \right\rangle  = \langle H^2,   \frac{r^2
      |L(r^2)|}{n} I_n \rangle =
  \frac{r^2 |L(r^2)|}{n} \Tr H^2.
\end{equation}
The first sum is only easy to compute when $L(r^2)$ forms a spherical $4$-design. Then (see \cite[Proposition 2.2]{Coulangeon2006a} for the computation)
\begin{equation}\label{eq:spherical-4-design-quadratic-form}
 \sum_{x \in L(r^2)} H[x]^2 = \frac{r^4 |L(r^2)|}{n(n+2)} 2 \Tr H^2.
\end{equation}
Together, when all shells form spherical $4$-designs, the Hessian~\eqref{eq:Hessian} simplifies to
\begin{equation}
  \label{eq:Coulangeon-Hessian}
  \nabla^2 \mathcal{E}(\alpha,L)[H] = \frac{\Tr H^2}{n(n+2)}
  \sum_{r > 0} |L(r^2)| \alpha r^2 \left(\alpha r^2 - (n/2 + 1)\right) e^{-\alpha r^2}.
\end{equation}
Therefore, every $H$ with Frobenius norm $\langle H, H \rangle = \Tr H^2 = 1$ is mapped to the same value, which, by choosing an orthonormal basis of the space of traceless symmetric matrices, implies that all the eigenvalues of the Hessian coincide.
\begin{lemma}\label{lem:Hessian:4design}
  Let $L$ be lattice, such that all shells form spherical $4$-designs. Then all the eigenvalues of the Hessian coincide and are given by
   \begin{equation} \label{eq:Hessian:4design}
      \nabla^2 \Ecal(\alpha,L)[H] = \frac{1}{n(n+2)} \sum_{r>0} \vert L(r^2) \vert \alpha r^2 (\alpha r^2 - (n/2 + 1)) e^{-\alpha r^2}.
  \end{equation}
\end{lemma}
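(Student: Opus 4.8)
The plan is to start from the shell-grouped form of the Hessian,
\[
  \nabla^2 \Ecal(\alpha,L)[H] = \alpha \sum_{r>0} e^{-\alpha r^2} \sum_{x \in L(r^2)} \left( \frac{\alpha}{2} H[x]^2 - \frac{1}{2} H^2[x] \right),
\]
and to evaluate the two inner sums in \eqref{eq:two-sums} separately under the hypothesis that every nonempty shell $L(r^2)$ is a spherical $4$-design (hence in particular a $2$-design). First I would dispose of the sum $\sum_{x \in L(r^2)} H^2[x]$: writing it as $\langle H^2, \sum_{x} xx^{\sf T}\rangle$ and inserting the $2$-design identity \eqref{eq:spherical-2-design} yields \eqref{eq:spherical-2-design-H-squared} immediately.

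The substantive step is the first sum $\sum_{x \in L(r^2)} H[x]^2$, and here I would invoke the harmonic decomposition of the quartic $p_H(x) = H[x]^2$ recorded in Lemma \ref{lemma:polynomial-p-H}. On a fixed shell $\|x\|^2 = r^2$ the decomposition \eqref{eq:harmonic-decomposition} reads $p_H(x) = p_{H,4}(x) + r^2 p_{H,2}(x) + r^4 p_{H,0}(x)$, with $p_{H,4} \in \Harm_4$ and $p_{H,2} \in \Harm_2$. Since $L(r^2)$ is a $4$-design, the characterization of designs via harmonic polynomials forces $\sum_x p_{H,4}(x) = 0$ and $\sum_x p_{H,2}(x) = 0$, so only the constant harmonic term survives:
\[
  \sum_{x \in L(r^2)} H[x]^2 = r^4 |L(r^2)| \, p_{H,0} = r^4 |L(r^2)| \frac{2}{(2+n)n} \Tr H^2,
\]
which is exactly \eqref{eq:spherical-4-design-quadratic-form}. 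This is where the $4$-design hypothesis is genuinely used, and the assumption $\Tr H = 0$ (built into Lemma \ref{lemma:polynomial-p-H}) is what makes the surviving degree-$0$ piece proportional to $\Tr H^2$.

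Substituting both evaluations back into the shell-grouped Hessian and collecting the common factor $\Tr H^2 / (n(n+2))$ yields the closed form \eqref{eq:Coulangeon-Hessian}. The final observation, which is really the content of the lemma, is that as a quadratic form on the tangent space of traceless symmetric matrices the Hessian is a scalar multiple of the squared Frobenius norm, $\nabla^2 \Ecal(\alpha,L)[H] = c(\alpha)\, \langle H, H\rangle$, where $c(\alpha)$ is the scalar appearing on the right-hand side of \eqref{eq:Hessian:4design}. Choosing an orthonormal basis of the tangent space with respect to $\langle A,B\rangle = \Tr(AB)$, the representing matrix of the Hessian is $c(\alpha) I$, so every eigenvalue equals $c(\alpha)$, giving \eqref{eq:Hessian:4design}.

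I do not expect a serious obstacle: once the two design identities are in place the remaining manipulation is linear, and both identities are supplied by Lemma \ref{lemma:polynomial-p-H} together with the harmonic characterization of designs. The only point demanding a little care is the bookkeeping of the powers of $r^2$ when restricting the decomposition \eqref{eq:harmonic-decomposition} to a single shell, and confirming that it is precisely the tracelessness of $H$ that removes the lower-order harmonic contributions and leaves a clean multiple of $\Tr H^2$.
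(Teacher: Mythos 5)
Your proof is correct and follows essentially the same route as the paper: group the Hessian by shells, evaluate the two sums in \eqref{eq:two-sums} via the design properties, and conclude that the resulting quadratic form is a scalar multiple of the Frobenius norm, so all eigenvalues coincide. The only difference is at the key identity \eqref{eq:spherical-4-design-quadratic-form}: the paper imports it from \cite[Proposition 2.2]{Coulangeon2006a}, whereas you derive it from Lemma~\ref{lemma:polynomial-p-H} together with the harmonic characterization of $4$-designs --- a self-contained derivation that uses exactly the mechanism the paper itself deploys later for the $2$-design case, so this is a stylistic rather than substantive deviation.
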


\subsection{The Hessian in the presence of $2$-designs}

The case when all shells form spherical $2$-designs but not (necessarily) spherical $4$-designs requires substantially more work. 
It is here where we first exploit the relation of modular lattices to modular forms for congruence subgroups.
Our investigation involves the quadratic forms 
\begin{equation}
\label{eq:crucial-quadratic-form}
  Q_{k}[H] = \sum_{x \in L(k)} H[x]^2
\end{equation}
defined for $k$ such that $L(k)$ is non-empty.

We start by investigating the first sum $\sum_{x \in L(r^2)} H[x]^2$ in~\eqref{eq:two-sums} in more detail. 
We decompose the polynomial $p_H(x) = H[x]^2$ into its harmonic components as in Lemma~\ref{lemma:polynomial-p-H} and get
  \[
    \sum_{x \in L(r^2)} p_H(x) = \sum_{x \in L(r^2)} p_{H,4}(x) + r^2 \sum_{x \in L(r^2)} p_{H,2}(x) + r^4 \sum_{x \in L(r^2)} p_{H,0}(x).
\]
Here the first sum equals
\[
\sum_{x \in L(r^2)} p_{H,4}(x) =  \sum_{x \in L(r^2)} H[x]^2 -
r^4 \frac{2}{(2+n)n} |L(r^2)| \Tr H^2,
\]
where we used Lemma~\ref{lemma:polynomial-p-H} and~\eqref{eq:spherical-2-design-H-squared}. 
The second sum vanishes
because $L(r^2)$ is a spherical $2$-design and the third summand equals
\[
  r^4 \sum_{x \in L(r^2)} p_{H,0}(x) = r^4  \frac{2}{(2+n)n} |L(r^2)| \Tr H^2.
\]

To understand $H[x]^2$, we analyze $p_{H,4}$. 
Since $p_{H,4}$ is a harmonic polynomial of degree 4, the theta series $\Theta_{L,p_{H,4}}$ has spherical coefficients and is therefore a cusp form of weight $n/2 + 4$ for the group $\Gamma_0(\ell)$ and the character $\chi$ from Section \ref{sec:theta-series}. 
We can use that fact to determine $\sum_{x \in L(r^2)} p_{H,4}(x)$ explicitly. \\
Let $e$ denote the dimension of the cuspidal space $\Scal_{n/2+4}(\Gamma_0(\ell),\chi)$ and let $C_1,\dots,C_e$ be a basis.
Consider the $q$-expansion of $C_i(\tau) = \sum_{m = 0}^{\infty} c_{i,m}q^m$. 
Since $C_1,\dots,C_e$ form a basis of the cuspidal space, there exist coefficients $b_1,\dots,b_e \in \R$, such that 
\begin{align}\label{eq:thetaLph4}
    \Theta_{L,p_{H,4}}(\tau) = \sum_{r>0} \left(\sum_{x \in L(r^2)} p_{H,4}(x) \right)q^{\frac{1}{2}r^2} = \sum_{i = 1}^{e} b_i \sum_{m = 0}^{\infty} c_{i,m}q^m.
\end{align}

To simplify the investigation of the above expression we now make the following assumption:\smallskip

\begin{assumption}
  \label{assumption}
$L$ is an $n$-dimensional $\ell$-modular lattice and $\Scal_{n/2 +4}(\Gamma_0(\ell),\chi)$ has a basis $C_1,\ldots,C_e$ in reduced row echelon form.
\end{assumption}

Under this assumption the coefficients $c_{i,m}$ of $C_i$ satisfy
\begin{equation} \label{eq:row:echelon}
    c_{i,m} = \begin{cases}
        1, \quad \text{ if } i = m, \\
        0, \quad \text{ if } i \neq m, m \in \{1,\dots,e\}\\
        *, \quad \text{ else.}
    \end{cases}
\end{equation}
We will briefly discuss in how far this assumption is a restriction of the full problem in Section \ref{sec:asymptotic}, in any case the assumption is met in all the cases we are mainly interested in.
This can be seen by direct computation; for $\ell = 2$ also compare the basis $F_{k,m}$ in \cite{Jenkins2015}.

Now we can compute the $b_i$ explicitly by comparing coefficients in both expansions above.
Equating the first $e$ coefficients yields
\begin{align}\label{eq:biconstants}
    b_i = \sum_{x\in L(2i)} H[x]^2 - \frac{8 i^2}{n(n+2)} \vert L(2i) \vert \Tr(H^2).
\end{align}
Therefore, the coefficients $b_i$ rely on the quadratic forms 
\begin{align}\label{eq:quadraticforms}
    Q_{2m}[H] =  \sum_{x\in L(2m)} H[x]^2, \quad m \in \{1,\dots,e\}.
\end{align}

In general, we obtain a formula for $r^2 = 2m$, namely
\begin{align}\label{eq:formelsummehx2}
    \sum_{x \in L(2m)} H[x]^2 = \sum_{i = 1}^{e} b_ic_{i,m} + \frac{8m^2}{n(n+2)} \vert L(2m) \vert \Tr(H^2).
\end{align}
We now use this information to find the eigenvalues of the Hessian \eqref{eq:Hessian} in terms of the eigenvalues of the quadratic forms $Q_{0},\ldots,Q_{2e}$.
Given $Q_{2m}$ as above its eigenvalues are the eigenvalues of the associated bilinear form $B_{Q_{2m}}: \mathcal{S}^n \times \mathcal{S}^n \rightarrow \R$ which is given by
\begin{align} \label{eq:bil-form-b_Q}
    B_{Q_{2m}}(G,H) = \sum_{x \in L(2m)} G[x]H[x].
\end{align}
We fix an orthogonal basis $(H_i)$ of the space of the symmetric matrices with trace zero and compute the eigenvalues of the Gram matrix of $B_{Q_{2m}}$.
Now if $H$ is an eigenvector of the Gram matrix for the eigenvalue $\lambda$, we have 
\begin{align*}
    \sum_{x\in L(2m)} H[x]^2 = \lambda \Tr(H^2).
\end{align*}
Using such $H$ for $Q_{2i}$ in \eqref{eq:biconstants} gives a formula for $b_i$:
\begin{align*}
    b_i = \left(\lambda - \frac{8 i^2}{n(n+2)} \vert L(2i) \vert\right) \Tr(H^2).
\end{align*}

If $Q_{2},\ldots,Q_{2e}$ are \emph{simultaneously diagonalizable}, we can conclude that for each shared eigenvector $H$, with eigenvalue $\lambda_i$ for $Q_{2i}$, we have
\begin{align*}
  \sum_{x \in L(2m)} H[x]^2 = &\sum_{i = 1}^{e} \left(c_{i,m}\left(\lambda_i - \frac{8 i^2}{n(n+2)} \vert L(2i) \vert\right) \Tr(H^2)\right) \\
  &+ 4m^2 \frac{2}{(2+n)n} \vert L(2m) \vert \Tr(H^2).
\end{align*}

Combining all the results, we can obtain the following theorem. 

\begin{theorem}\label{thm:maintheorem}
    Let $L$ be an even $\ell$-modular lattice, such that all shells of $L$ form spherical $2$-designs.
    Let 
    \begin{align*}
        \Theta_L(\tau) = \sum_{m=0}^{\infty}a_mq^m \quad \text{ with } a_m = \vert L(2m)\vert
    \end{align*}
    be the theta series of $L$.
    Assume that Assumption~\ref{assumption} holds and 
that $Q_{2},\dots,Q_{2e}$ are simultaneously diagonalizable with common eigenspaces $E_1,\dots,E_s$ and associated eigenvalues $\lambda_{i,1},\dots,\lambda_{i,s}$ for $Q_{2i}$.
    Then the eigenvalues of the Hessian $\nabla^2 \mathcal{E}(\alpha,L)$ are given by 
    \begin{equation} \label{eq:hessian:eigenvalues}
        \begin{aligned}
          &\frac{1}{n(n+2)}\sum_{m = 1}^{\infty} \left(\sum_{i = 1}^{e} \left(c_{i,m}\frac{\alpha^2}{2}\left(\lambda_{i,k}n(n+2)- 8 i^2 a_i \right)\right)\right) e^{-2\alpha m} \\
        &+ \frac{1}{n(n+2)}\sum_{m = 1}^{\infty} (a_m2\alpha m(2\alpha m - (n/2 + 1))) e^{-2\alpha m}
        \end{aligned}
      \end{equation}
    for $k \in \{ 1,\ldots,s \}$.
\end{theorem}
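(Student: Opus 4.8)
The plan is to assemble the per-shell identities already derived into a single statement about the Hessian bilinear form, and then read off its eigenvalues on the common eigenspaces. Starting from the shell-grouped version of~\eqref{eq:Hessian}, I would regard the Hessian as a symmetric bilinear form on the space of traceless symmetric matrices equipped with $\langle A,B\rangle = \Tr(AB)$, and polarize the two sums of~\eqref{eq:two-sums}. By~\eqref{eq:spherical-2-design-H-squared} the contribution of $\sum_{x\in L(r^2)}H^2[x]$ is a scalar multiple of $\Tr H^2$, hence a multiple of the Frobenius identity form $\mathrm{Id}$, while the contribution of $\sum_{x\in L(r^2)}H[x]^2$ is exactly the quadratic form $Q_{r^2}$ of~\eqref{eq:crucial-quadratic-form}. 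Thus the Hessian form decomposes as $\frac{\alpha^2}{2}\sum_{m>0}e^{-2\alpha m}B_{Q_{2m}} - c(\alpha)\,\mathrm{Id}$, with $c(\alpha) = \frac{\alpha}{n}\sum_{m>0} m\,a_m e^{-2\alpha m}$ and $B_{Q_{2m}}$ the bilinear form of~\eqref{eq:bil-form-b_Q}.

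The crucial step, which I expect to be the conceptual heart of the argument, is to promote the hypothesis that only $Q_2,\dots,Q_{2e}$ are simultaneously diagonalizable to a statement about all shells at once. This is precisely what~\eqref{eq:formelsummehx2} together with~\eqref{eq:biconstants} delivers: since $\Theta_{L,p_{H,4}}$ is a cusp form in a space of dimension $e$ and Assumption~\ref{assumption} fixes a reduced row echelon basis, every $B_{Q_{2m}}$ is forced to equal the fixed linear combination $\sum_{i=1}^{e}c_{i,m}\bigl(B_{Q_{2i}} - \tfrac{8i^2}{n(n+2)}a_i\,\mathrm{Id}\bigr) + \tfrac{8m^2}{n(n+2)}a_m\,\mathrm{Id}$ of the first $e$ forms and the Frobenius identity. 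Because $\mathrm{Id}$ is diagonal in every orthonormal basis, any common orthonormal eigenbasis of $B_{Q_2},\dots,B_{Q_{2e}}$ automatically diagonalizes $B_{Q_{2m}}$ for all $m$, with eigenvalue on $E_k$ equal to $\sum_{i=1}^{e}c_{i,m}\bigl(\lambda_{i,k}-\tfrac{8i^2}{n(n+2)}a_i\bigr) + \tfrac{8m^2}{n(n+2)}a_m$.

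With this in hand, the common eigenspaces $E_1,\dots,E_s$ diagonalize the whole Hessian form, so its eigenvalues are obtained by evaluating $\nabla^2\Ecal(\alpha,L)[H]$ on a normalized $H\in E_k$ with $\Tr H^2 = 1$. I would substitute the eigenvalue of $B_{Q_{2m}}$ just computed, separate the cuspidal contribution (producing the $c_{i,m}\tfrac{\alpha^2}{2}(\lambda_{i,k}n(n+2)-8i^2a_i)$ terms after clearing the denominator $n(n+2)$) from the purely design-theoretic part, and combine $\tfrac{8m^2}{n(n+2)}a_m$ against the $\tfrac{2m}{n}a_m$ coming from $c(\alpha)$ over the common denominator $n(n+2)$. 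This collapses to the factor $2\alpha m\bigl(2\alpha m - (n/2+1)\bigr)a_m$ and yields exactly~\eqref{eq:hessian:eigenvalues}.

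The only genuine subtlety beyond this bookkeeping is the interchange of the infinite shell summation with the diagonalization: one must justify summing the termwise identity for $B_{Q_{2m}}$ against the weights $\tfrac{\alpha^2}{2}e^{-2\alpha m}$ to obtain an identity of bilinear forms on the finite-dimensional space of traceless symmetric matrices. Since the shell sizes $a_m$ grow only polynomially while $e^{-2\alpha m}$ decays exponentially, each matrix entry converges absolutely and the interchange is harmless, so the effort is concentrated in the propagation step of the second paragraph rather than in any analytic estimate.
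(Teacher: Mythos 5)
Your proposal is correct and follows essentially the same route as the paper: the paper's own proof is exactly the derivation preceding the theorem---the harmonic decomposition of $H[x]^2$, the identification of $\Theta_{L,p_{H,4}}$ as a cusp form expanded in the row-echelon basis to get \eqref{eq:biconstants} and \eqref{eq:formelsummehx2}, and evaluation at shared eigenvectors of $Q_2,\dots,Q_{2e}$---which you have merely repackaged as an identity of bilinear forms. Your version does make explicit two points the paper leaves implicit (that \eqref{eq:formelsummehx2} propagates the common eigenbasis to \emph{every} shell form $B_{Q_{2m}}$ and hence to the full Hessian, and that the exponential decay justifies the interchange of summation with diagonalization), but the substance of the argument is identical.
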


Note that this theorem also includes the case when all shells of $L$
form spherical $4$-designs like in~\eqref{eq:Hessian:4design}
because of~\eqref{eq:spherical-4-design-quadratic-form}. 

\begin{remark}
  The assumptions in the above theorem are necessary to the best of our current knowledge. 
  To us, it is an interesting question when the bases in row echelon form exist (see also the following section) and whether the quadratic forms $Q_1,\ldots,Q_{2e}$ are simultaneously diagonalizable in general. 
  For the explicit examples in Section \ref{sec:results} these conditions can be checked, and the theorem is sufficient in its current form.
\end{remark}

\subsection{Asymptotic behavior and properties of spaces of cusp forms}\label{sec:asymptotic}

The asymptotic behavior of an $\ell$-modular lattice $L$ is easiest to describe if all shells of $L$ form spherical $4$-designs.
In that case, when the parameter $\alpha$ is large enough, \eqref{eq:Hessian:4design} is strictly positive, which shows that $L$ is a local minimum for $L \mapsto \mathcal{E}(\alpha, L)$.

Analyzing the asymptotic behavior of $\Ecal(\alpha,L)$ as $\alpha \rightarrow \infty$ is harder if the shells of $L$ only form $2$-designs.
We carry this out for extremal $\ell$-modular lattices with $\ell$ prime and $1 + \ell$ dividing $24$ such that the space of cusp forms $\Scal_{n/2 + 4}(\Gamma_0(\ell),\chi)$, has a basis $C_1,\ldots,C_e$ as in \eqref{eq:row:echelon}, so that Theorem \ref{thm:maintheorem} can be applied. 

\medskip

Let $L$ be an extremal $\ell$-modular lattice with $\ell$ prime and $1 + \ell$ dividing $24$.
Before giving the asymptotic result, we briefly discuss bases and the dimension of $\Scal_{n/2 +4}(\Gamma_0(\ell),\chi)$ (containing theta series with spherical coefficients $\Theta_{L,p}$ coming from harmonic polynomials $p$ of degree $4$) in relation to $\Scal_{n/2}(\Gamma_0(\ell),\chi)$ (containing $\Theta_L$ itself).

\smallskip

We frequently assume that $\Scal_{n/2 +4}(\Gamma_0(\ell),\chi)$ has a basis in row echelon form.
Whether such a basis exists, depending on the weight $n/2 + 4$, the prime $\ell$, and the character $\chi$ is, in general, unanswered.
The existence of a basis in row-echelon form is guaranteed for the full modular group $\SL_2(\Z)$, this is referred to as the (integral) Miller basis and is attributed to Miller \cite{Miller1975}.
However, even for $\Gamma_0(\ell)$, without a character, it can happen that such a basis no longer exists, for a recent treatment we refer to Wang \cite{Wang2023}.
In fact, the discussion after Question 1.5. in \cite{Wang2023} gives that no ``Miller basis'', i.e. basis in reduced row-echelon form, exists for $\Scal_{12k}(\Gamma_0(\ell))$ if $\ell$ is a prime and $12k < \ell -3$. 
The smallest such parameters give the space $\Scal_{12}(\Gamma_0(17))$, which is of dimension $16$ and admits a basis $C_1,\ldots,C_{16}$ where the $q$-expansions of $C_1,\ldots,C_{15}$ start with $q^1,\ldots,q^{15}$, often followed by a multiple of $q^{16}$, but the $q$-expansion of $C_{16}$ starts with $q^{17}$. 

\smallskip

Let now be $m_0$ be such that $2m_0$ is the minimum of $L$, i.e. the squared norm of a shortest nonzero element of $L$.
Then the coefficient $a_m$ of $q^m$ in $\Theta_L$ is zero for $1 \leq m < m_0$ and 
\[
 m_0 - 1 = \dim(\Scal_{n/2}(\Gamma_*(\ell),\chi)) \leq \dim(\Scal_{n/2}(\Gamma_0(\ell),\chi)).
\]
The first equality is contained in \cite[Theorem 2.1]{Scharlau1999}, formulas for $\dim(\Scal_{n/2}(\Gamma_*(\ell),\chi))$ for certain $k$ and $\ell$ can be found in \cite[Theorem 6.]{Quebbemann1995} and \cite[Proposition 1.4]{Scharlau1999}.
We will be interested in the case that
\begin{equation} \label{eq:dim:assumption}
 m_0 - 1 < e = \dim(\Scal_{n/2+4}(\Gamma_0(\ell),\chi),
\end{equation}
which is satisfied if we have 
\begin{equation} \label{eq:dim:strict}
  \begin{aligned}
    &\dim(\Scal_{n/2}(\Gamma_*(\ell),\chi)) < \dim(\Scal_{n/2}(\Gamma_0(\ell),\chi) \\
\text{or} \quad &\dim(\Scal_{n/2}(\Gamma_0(\ell),\chi)) < \dim(\Scal_{n/2+4}(\Gamma_0(\ell),\chi) =e.
  \end{aligned}
\end{equation}

Nonetheless, we can derive a simple criterion to decide when there are strictly less cusp forms for the Fricke group $\Gamma_*(\ell)$ than for $\Gamma_0(\ell)$.
We derive this avoiding dimension formulas, as closed expressions for the dimension of $\Scal_{n/2}(\Gamma_*(\ell),\chi)$ are not readily available in the literature.

\begin{lemma}\label{lem:dimensionality:gap}
  Let $\ell$ be prime such that $1 + \ell$ divides $24$. Let $k_1 = \frac{24}{1 + \ell}$. 
  Then 
  \[
    \Scal_k(\Gamma_*(\ell),\chi) \subsetneq \Scal_k(\Gamma_0(\ell),\chi)
  \]
  if $k \geq k_{\Eis} + k_1$, where $k_{\Eis} = 4$ if $\ell = 2$ and $k_{\Eis} = 3$ otherwise.
\end{lemma}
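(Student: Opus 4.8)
The plan is to recast the statement as a nonvanishing assertion about a Fricke eigenspace and then to exhibit an explicit witness. Since $\Gamma_0(\ell)\subseteq\Gamma_*(\ell)$, the inclusion $\Scal_k(\Gamma_*(\ell),\chi)\subseteq\Scal_k(\Gamma_0(\ell),\chi)$ holds automatically, so only \emph{properness} is at stake. The Fricke involution $W_\ell$ normalizes $\Gamma_0(\ell)$ and is compatible with $\chi$, and because $W_\ell^2=-I$ one checks that the normalized slash operator $f\mapsto i^{-k}\,f|_kW_\ell$ squares to the identity, hence is an involution on $\Scal_k(\Gamma_0(\ell),\chi)$. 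Its $(+1)$-eigenspace is exactly $\Scal_k(\Gamma_*(\ell),\chi)$ (this is precisely what the choice $\chi(W_\ell)=i^k$ encodes), so we obtain a splitting $\Scal_k(\Gamma_0(\ell),\chi)=\Scal_k(\Gamma_*(\ell),\chi)\oplus\Scal_k^-$, and the lemma becomes the assertion $\Scal_k^-\neq 0$ for $k\geq k_{\Eis}+k_1$.

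First I would build an engine for producing anti-invariant forms out of eta-products. A short computation from $\eta(-1/\tau)=\sqrt{-i\tau}\,\eta(\tau)$ shows that $W_\ell$ interchanges $\eta(\tau)$ and $\eta(\ell\tau)$ up to the weight-one automorphy factor and an explicit scalar; hence $i^{-k}(\eta(\tau)^a\eta(\ell\tau)^b)|_kW_\ell$ is a scalar multiple of $\eta(\tau)^b\eta(\ell\tau)^a$. Consequently, whenever $a\neq b$ and $g_{a,b}:=\eta(\tau)^a\eta(\ell\tau)^b$ lies in $\Scal_k(\Gamma_0(\ell),\chi)$, the antisymmetrization $g_{a,b}-i^{-k}g_{a,b}|_kW_\ell$ is a \emph{nonzero} element of $\Scal_k^-$, nonzero because $g_{a,b}$ and $g_{b,a}$ have distinct leading $q$-exponents $\tfrac{a+\ell b}{24}\neq\tfrac{\ell a+b}{24}$. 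To certify that $g_{a,b}$ is an admissible cusp form I would invoke Ligozat's criterion for prime level $\ell$: integrality and vanishing at the two cusps require $a+\ell b\equiv \ell a+b\equiv 0\pmod{24}$, while matching the nebentypus to $\chi$ reduces, via the Kronecker-symbol formula for $\chi$, to $b-a\equiv 0\pmod 4$. At the threshold weight $k_0:=k_{\Eis}+k_1$ one exhibits such exponents explicitly; for $\ell=3$ the pair $(a,b)=(3,15)$ yields a weight-$9$ cusp form with the correct character, and indeed $k_0=9$ (for $\ell=2$ one may instead take $\eta(\tau)^{24}$ versus $\eta(2\tau)^{24}$ at weight $12$).

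To reach all weights $k\geq k_0$ I would propagate the base form. The generators of $\Mcal(\Gamma_*(\ell),\chi)=\C[\Theta,\Delta_\ell]$ from \eqref{eq:starspace:polynomialalgebra} are Fricke-invariant, so multiplying any element of $\Scal_k^-$ by $\Theta$ (weight $k_{\Eis}$) or by the invariant cusp form $\Delta_\ell$ (weight $k_1$) again produces an anti-invariant cusp form. Thus a single base form generates nonzero elements of $\Scal_k^-$ for every $k$ in $k_0+\langle k_{\Eis},k_1\rangle$. For $\ell\in\{2,3,7\}$ the numerical semigroup $\langle k_{\Eis},k_1\rangle$ equals $\gcd(k_{\Eis},k_1)\,\Z_{\geq 0}$, so this already covers every $k\geq k_0$ with $k\equiv 0\pmod{\gcd(k_{\Eis},k_1)}$; for the remaining residues $k$ lies outside the semigroup, whence $\Mcal_k(\Gamma_*(\ell),\chi)=0$, so $\Scal_k(\Gamma_*(\ell),\chi)=0$ and properness reduces to the bare nonvanishing of $\Scal_k(\Gamma_0(\ell),\chi)$.

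The hard part will be exactly this last reduction together with the cases $\ell\in\{5,11,23\}$: there the semigroup $\langle k_{\Eis},k_1\rangle$ has genuine gaps, leaving finitely many exceptional weights $k\geq k_0$ in the correct residue class that are not reached by propagation from the threshold form and for which the two-factor eta-quotient may fail the divisibility $k_1\mid 2k$ (e.g.\ $\ell=11$, $k=6$). Each such weight must be settled independently, either by a different explicit anti-invariant cusp form or by checking directly that $\Scal_k(\Gamma_*(\ell),\chi)$ vanishes; and in the degenerate weights one must still establish $\Scal_k(\Gamma_0(\ell),\chi)\neq 0$ while avoiding the very dimension formulas this lemma is meant to circumvent. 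Since the applications in this paper concern $\ell\in\{2,3\}$, where no exceptional weights occur and the degenerate case is handled by an explicit form, the argument there is clean and complete.
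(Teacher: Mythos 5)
Your overall strategy---decompose $\Scal_k(\Gamma_0(\ell),\chi)$ into eigenspaces of the normalized Fricke involution, exhibit an explicit anti-invariant eta-quotient at the threshold weight, and propagate upward by multiplying with the Fricke-invariant generators $\Theta$ and $\Delta_\ell$---is genuinely different from the paper's proof. The paper instead takes an arbitrary $E$ in the two-dimensional space $\Eis_{k-k_1}(\Gamma_0(\ell),\chi)$, notes that $E\Delta_\ell \in \Scal_k(\Gamma_0(\ell),\chi)$, and uses the identification $\Mcal(\Gamma_*(\ell),\chi)=\C[\Theta,\Delta_\ell]$ to show that $E\Delta_\ell$ can lie in $\Scal_k(\Gamma_*(\ell),\chi)$ only if $E$ itself lies in the Eisenstein part of $\Mcal_{k-k_1}(\Gamma_*(\ell),\chi)$, which is at most one-dimensional because the Fricke group has a single cusp; since a two-dimensional space cannot be contained in a one-dimensional one, properness follows uniformly for all six primes and all $k \geq k_{\Eis}+k_1$, with no explicit witnesses and no case analysis. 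Your proposal, as written, does not reach that conclusion.

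Two concrete gaps. First, you misidentify the weight of $\Theta$: it is the theta series of an $\ell$-modular lattice of \emph{lowest dimension}, so its weight is $2$ for $\ell=2$ (namely $\Theta_{D_4}$) and $1$ for $\ell=3$ (namely $\Theta_{A_2}$), not $k_{\Eis}$. This is not cosmetic: with your stated generator weights, propagation from the threshold reaches only $k_{\Eis}+k_1+\langle k_{\Eis},k_1\rangle$, and your fallback for the missed weights---that $\Mcal_k(\Gamma_*(\ell),\chi)=0$ whenever $k$ lies outside the semigroup---is false. For $\ell=3$ and $k=10$ your propagation misses $k$ (since $10 \notin 9+\langle 3,6\rangle$), yet $\Theta_{A_2}^{10}$ is a nonzero element of $\Mcal_{10}(\Gamma_*(3),\chi)$, so the claimed vanishing fails exactly in the case you declare ``clean and complete.'' (The repair is to propagate with the correct weights $1$ and $6$, after which every $k\geq 9$ is reached for $\ell=3$; similarly weights $2$ and $8$ handle $\ell=2$.) Second, and more fundamentally, the lemma is stated for \emph{all} primes $\ell$ with $1+\ell$ dividing $24$, i.e.\ $\ell\in\{2,3,5,7,11,23\}$, and you explicitly leave the cases $\ell\in\{5,11,23\}$, the semigroup gaps, and the degenerate weights (where one would still have to prove $\Scal_k(\Gamma_0(\ell),\chi)\neq 0$ without the dimension formulas) unsettled. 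Deferring ``the hard part'' means the lemma is not proved; an argument that works prime-by-prime through explicit eta-quotients would need all of those witnesses supplied, whereas the paper's algebraic argument never requires any.
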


The constant $k_{\Eis}$ is made so that $k_{\Eis}$ is the smallest weight, such that for all $k'\geq k_{\Eis}$ the space $\Eis_{k'}(\Gamma_0(\ell),\chi))$ is $2$-dimensional.
The significance of this will become apparent in the proof of the lemma.

\begin{proof}
  We recall from \eqref{eq:starspace:polynomialalgebra} that 
  \[
    \Mcal(\Gamma_*(\ell),\chi) = \bigoplus_{k \geq 0} \Mcal_k(\Gamma_*(\ell),\chi) = \C[\Theta,\Delta_{\ell}], 
  \]
  where $\Theta$ is the theta series of an even $\ell$-modular level of lowest possible dimension $2k_0$, i.e. a modular form for $\Gamma_*(\ell)$ of weight $k_0$, and $\Delta_{\ell}(\tau) = (\eta(\tau)\eta(\ell \tau))^{k_1}$ with $k_1 = \frac{24}{1 + \ell}$ is a cusp form for $\Gamma_*(\ell)$ of weight $k_1$. 

  Write $k = k' + k_1$. Since $k' \geq k_{\Eis}$ the space $\Eis_{k'}(\Gamma_0(\ell),\chi))$ is $2$-dimensional.
  Let $E_{k'}^{\chi}$ be an element of this space.
  Then $E_{k'}^{\chi} \Delta \in \Scal_{k}(\Gamma_0(\ell),\chi)$.
  If $E_{k'}^\chi$ is also an element of $\Scal_{k}(\Gamma_*(\ell),\chi)$ we can write
  \[
    E_{k'}^{\chi} \Delta_{\ell} = \sum_{\substack{\lambda,\mu \geq0 \\ \lambda k_0 + \mu k_1 = k}} \Theta^\lambda \Delta_{\ell}^\mu,
  \]
  forcing $\mu \geq 1$ for every summand, since $E_{k'}^{\chi} \Delta_{\ell}$ is a cusp form.
  Cancelling $\Delta_{\ell}$ from both sides of the above identity, we get $E_{k'}^{\chi} \in \Mcal_{k'}(\Gamma_*(\ell),\chi)$. 
  Now the Eisenstein subspace of $\Mcal_{k'}(\Gamma_*(\ell),\chi)$ is $1$-dimensional, while $\Eis_{k'}(\Gamma_0(\ell),\chi))$ is $2$-dimensional, so there exists some $E_{k'}^{\chi} \in \Eis_{k'}(\Gamma_0(\ell),\chi)$ such that $E_{k'}^{\chi} \Delta_{\ell} \notin \Scal_{k}(\Gamma_*(\ell),\chi)$, as desired.
\end{proof}

We briefly note that also the latter inequality in \eqref{eq:dim:strict} can be verified directly, e.g. with the help of a computer, for $\ell$ and $k$ of interest.
If $\chi$ acts trivially on $\Gamma_0(\ell)$ one can also easily reduce this question to evaluating the known dimension formulas for spaces of cusp forms for $\Gamma_0(\ell)$ (e.g. \cite[Theorems 3.5.1 and 3.6.1]{Diamond2005}):
In all such cases the dimension grows by at least $1$ since the dimension of $\Scal_k(\Gamma_0(\ell))$ grows at least with $\lfloor \frac{k}{4}\rfloor$. 

From the preceding discussion we derive the following helpful observation.

\begin{corollary} \label{cor:minimum:vs:dimension}
  Let $\ell$ be a prime such that $1 + \ell$ divides $24$. 
  Let $L$ be an even extremal $\ell$-modular lattice of dimension $n$ with minimum $2m_0$.
  Then $m_0 - 1 < e= \dim(\Scal_{n/2+4}(\Gamma_0(\ell),\chi)$ unless $\ell = 2$ and $L \cong D_4$ or $\ell =3$ and $L \cong A_2$.
\end{corollary}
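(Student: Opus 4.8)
The plan is to prove the strict inequality $m_0 - 1 < e$ by combining the two halves of the sandwich already isolated in \eqref{eq:dim:strict}. Extremality together with \cite[Theorem 2.1]{Scharlau1999} gives the identity $m_0 - 1 = \dim \Scal_{n/2}(\Gamma_*(\ell),\chi)$, while $e = \dim \Scal_{n/2+4}(\Gamma_0(\ell),\chi)$ by definition. First I would record the chain
\[
  \dim \Scal_{n/2}(\Gamma_*(\ell),\chi) \le \dim \Scal_{n/2}(\Gamma_0(\ell),\chi) \le \dim \Scal_{n/2+4}(\Gamma_0(\ell),\chi) = e.
\]
The left inequality holds because $\Gamma_0(\ell) \subseteq \Gamma_*(\ell)$, so the forms additionally invariant under the Fricke involution form a subspace; the right inequality comes from multiplication by the level-one weight-$4$ Eisenstein series $E_4$, which has trivial character and is not identically zero, hence induces an injection $\Scal_{n/2}(\Gamma_0(\ell),\chi) \hookrightarrow \Scal_{n/2+4}(\Gamma_0(\ell),\chi)$ preserving $\chi$ and sending cusp forms to cusp forms. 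It therefore suffices to show that at least one of the two inequalities is strict, which is precisely the dichotomy \eqref{eq:dim:strict}.

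For the generic range I would invoke Lemma \ref{lem:dimensionality:gap} with $k = n/2$: as soon as $n/2 \ge k_{\Eis} + k_1$, the inclusion $\Scal_{n/2}(\Gamma_*(\ell),\chi) \subsetneq \Scal_{n/2}(\Gamma_0(\ell),\chi)$ is proper, the left inequality is strict, and $m_0 - 1 < e$ follows at once. Since $\ell$ ranges over the finite list $\{2,3,5,7,11,23\}$ with $k_1 = 24/(1+\ell)$ and $k_{\Eis} \in \{3,4\}$ explicit, this disposes of all but finitely many pairs $(\ell, n)$.

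The remaining pairs satisfy $n/2 < k_{\Eis} + k_1$, which bounds $n$ for each of the six primes, so only finitely many cases survive. For these I would verify the \emph{second} inequality $\dim \Scal_{n/2}(\Gamma_0(\ell),\chi) < e$ directly: when $\chi$ is trivial it follows from the classical dimension formulas for $\Gamma_0(\ell)$, using that the dimension increases by at least one each time the weight is raised by $4$; when $\chi$ is nontrivial one simply computes the two relevant dimensions (e.g. with a computer algebra system). Tracking where this finite verification degenerates, the only pairs for which \emph{both} inequalities collapse are $(\ell,n) = (2,4)$ and $(\ell,n) = (3,2)$: in each case $e = \dim \Scal_{n/2+4}(\Gamma_0(\ell),\chi) = 0$, which forces the whole chain to vanish and hence $m_0 - 1 = 0 = e$. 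These are exactly the lattices $D_4$ and $A_2$, matching the stated exceptions.

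The main obstacle is the finite small-weight check rather than the generic argument. The nebentypus $\chi$ is nontrivial for several of the surviving pairs, and the uniform growth estimate for $\dim \Scal_k(\Gamma_0(\ell))$ is only immediate for trivial character, so the delicate point is to pin down precisely the vanishing loci of $\Scal_{n/2+4}(\Gamma_0(\ell),\chi)$ and to confirm that $D_4$ and $A_2$ are the sole failures. Once these finitely many dimensions are tabulated, the corollary follows.
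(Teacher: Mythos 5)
Your proposal is correct and follows essentially the same route as the paper: use Lemma \ref{lem:dimensionality:gap} in the generic range, then settle the finitely many remaining small-weight cases by directly verifying the second inequality of \eqref{eq:dim:strict}, with $(\ell,n)=(2,4)$ (i.e.\ $D_4$) and $(\ell,n)=(3,2)$ (i.e.\ $A_2$) emerging as the only degenerations, where $e=0$. The only (harmless) deviation is that you invoke the lemma at weight $n/2$, so your generic range is $n/2 \geq k_{\Eis}+k_1$ and a few more cases land in the finite check, whereas the paper invokes it at weight $n/2+4$; both variants work, and your explicit justification of the weak inequalities (Fricke subspace inclusion and multiplication by $E_4$) fills in steps the paper leaves implicit.
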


\begin{proof}
  If $n/2 + 4 \geq k_{\Eis} + k_1$ this immediately follows from Lemma \ref{lem:dimensionality:gap}.
  There are only finitely many cases left, that are not covered by this.
  In these cases we check directly whether the latter inequality in \eqref{eq:dim:strict} is satisfied.
  Direct computations then give that only two (trivial) exceptions remain, these are $\ell = 2$ and $n = 4$, which is $D_4$, and $\ell = 3$ and $n = 2$, which is $A_2$.
  In both cases $\dim(\Scal_{n/2+4}(\Gamma_0(\ell),\chi) = 0$
\end{proof}

\medskip

We can now turn back to observe the following asymptotic behavior. 

\begin{theorem}
  \label{thm:large-alpha}
  Let $L$ be an extremal even $\ell$-modular lattice of dimension $n$ and let $m_0$ be such that $2m_0$ is the minimum of $L$.
  Assume that $L$ satisfies the assumptions of Theorem \ref{thm:maintheorem}.
  Then the lattice $L$ is a local minimum for all large enough $\alpha$ if and only if all eigenvalues of $Q_{2m_0}$ are strictly positive. 
  If one eigenvalue of $Q_{2m_0}$ vanishes, then $L$ is a saddle point for all large enough $\alpha$.
\end{theorem}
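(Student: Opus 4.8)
The plan is to insert the eigenvalue formula \eqref{eq:hessian:eigenvalues} from Theorem \ref{thm:maintheorem} into an asymptotic analysis as $\alpha \to \infty$. Each Hessian eigenvalue is a series of the shape $\sum_{m \geq 1} P_m(\alpha)\, e^{-2\alpha m}$ with $P_m$ a polynomial of degree at most two, and since $e^{-2\alpha m}$ decays faster the larger $m$ is---even against the polynomial prefactors, as $\tfrac{\alpha^2 e^{-2\alpha(m_0+1)}}{\alpha\, e^{-2\alpha m_0}} = \alpha e^{-2\alpha} \to 0$---the sign of the eigenvalue for large $\alpha$ is governed entirely by the coefficient of the smallest index $m$ whose term does not vanish. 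First I would record that this smallest index is exactly $m = m_0$: for $m < m_0$ the shell $L(2m)$ is empty, so $a_m = 0$ and $Q_{2m} = 0$, forcing $\lambda_{m,k} = 0$ and killing both the cuspidal and the Eisenstein contributions at those indices.

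Next I would isolate and simplify the $m = m_0$ coefficient. By Corollary \ref{cor:minimum:vs:dimension} we have $m_0 \leq e$ (the two degenerate exceptions $A_2$ and $D_4$ have $e = 0$ and fall under the $4$-design case of Lemma \ref{lem:Hessian:4design}), so the index $m_0$ lies in the range where the row echelon form \eqref{eq:row:echelon} applies and $c_{i,m_0} = \delta_{i,m_0}$. Hence in the inner sum only $i = m_0$ survives, and the coefficient of $e^{-2\alpha m_0}$ collapses to
\[
\frac{1}{n(n+2)}\left[\frac{\alpha^2}{2}\bigl(\lambda_{m_0,k}\,n(n+2) - 8 m_0^2 a_{m_0}\bigr) + 2 a_{m_0} m_0 \alpha\bigl(2 m_0 \alpha - (n/2+1)\bigr)\right].
\]
The two $\alpha^2$ contributions $-4 m_0^2 a_{m_0}$ and $+4 m_0^2 a_{m_0}$ cancel, leaving the clean expression
\[
\frac{1}{2}\lambda_{m_0,k}\,\alpha^2 - \frac{2 a_{m_0} m_0 (n/2+1)}{n(n+2)}\,\alpha.
\]
This cancellation is the computational heart of the argument.

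With this in hand the dichotomy is immediate, once I recall that $Q_{2m_0}[H] = \sum_{x \in L(2m_0)} H[x]^2$ is a sum of squares, hence positive semidefinite, so every eigenvalue satisfies $\lambda_{m_0,k} \geq 0$. If $\lambda_{m_0,k} > 0$ the $\alpha^2$ term dominates and the Hessian eigenvalue on the eigenspace $E_k$ is strictly positive for all large $\alpha$; if $\lambda_{m_0,k} = 0$ the surviving leading term is $-\tfrac{2 a_{m_0} m_0 (n/2+1)}{n(n+2)}\,\alpha < 0$, so that eigenvalue is strictly negative for all large $\alpha$. Since the full Hessian is positive definite exactly when every $\lambda_{m_0,k} > 0$, this establishes the equivalence between being a local minimum for large $\alpha$ and the strict positivity of all eigenvalues of $Q_{2m_0}$.

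For the final claim I must still exhibit a positive direction whenever some eigenvalue vanishes, in order to upgrade ``not a minimum'' to ``saddle point''. Here I would argue that $Q_{2m_0}$ cannot vanish identically on the traceless symmetric matrices: if it did, then $\langle xx^{\sf T}, H\rangle = 0$ for every minimal vector $x$ and every traceless $H$, forcing each rank-one matrix $xx^{\sf T}$ to be a scalar multiple of $I_n$, which is impossible in dimension $n \geq 2$. Thus some $\lambda_{m_0,k'} > 0$ exists, its Hessian eigenvalue is positive, and together with the negative eigenvalue coming from the vanishing direction this makes the Hessian indefinite, so $L$ is a saddle point for all large $\alpha$. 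The main obstacle I expect is not any single estimate but the careful bookkeeping around the echelon-form cancellation---in particular verifying via Corollary \ref{cor:minimum:vs:dimension} that $m_0 \leq e$, so the simplification is legitimate at the dominant index---together with this positive-semidefiniteness-plus-nondegeneracy step that rules out the fully degenerate scenario.
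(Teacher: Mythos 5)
Your proposal is correct and takes essentially the same route as the paper's proof: both isolate the $m=m_0$ term of \eqref{eq:hessian:eigenvalues} via the row-echelon collapse justified by $m_0-1 < e$ (Corollary \ref{cor:minimum:vs:dimension}), exploit the cancellation of the two $\alpha^2$ contributions to obtain a leading term of the form $\tfrac{1}{2}\lambda_{m_0,k}\alpha^2 - C\alpha$ with $C>0$, and settle the minimum/saddle dichotomy using that $Q_{2m_0}$ is positive semidefinite and not identically zero on traceless matrices. The only step to tighten is the tail control: since the coefficients $c_{i,m}$ and $a_m$ grow polynomially in $m$, one must bound the entire series $\sum_{m>m_0}$ (the paper invokes polynomial coefficient growth together with Lemma \ref{lem:approx:tail}) rather than only the ratio of two consecutive terms; incidentally, your explicitly computed leading coefficient is the correct one, the paper's displayed dominating summand containing a harmless typo in the linear term.
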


\begin{proof}
First we note that this is obviously true for $L$ isometric to $A_2$ or $D_4$, as these lattices are locally universally optimal among lattices by results of Montgomery \cite{Montgomery1988} and Sarnak and Str\"ombergsson \cite{Sarnak2006a}.
For the remaining cases we use our analysis of the eigenvalues of the Hessian of energy as in Theorem \ref{thm:maintheorem}.

\smallskip
The growth of $c_{i,m}$ and $a_m$ is polynomial in $m$ (as we will see in Section \ref{sec:bounds}).
Since $L$ is extremal, $Q_{2m} = 0$ for $1 \leq m < m_0$, but $Q_{2m_0}$ is a non-trivial quadratic form.
We dissect \eqref{eq:hessian:eigenvalues}. 
Firstly we have 
\[
  \frac{1}{n(n+2)}\left(\sum_{i = 1}^{e} \left(c_{i,m}\frac{\alpha^2}{2}\left(\lambda_{i,k}n(n+2)- 8 i^2 a_i \right)\right)\right) e^{-2\alpha m} = 0
\]
if $m < m_0$, since then $Q_{2m} = 0$, and so are all associated eigenvalues, and $a_m = 0$.
Secondly, 
\[
  \frac{1}{n(n+2)}(a_m2\alpha m(2\alpha m - (n/2 + 1))) e^{-2\alpha m} = 0
\]
since $a_m = 0$.

So the first contribution to the $k$-th eigenvalue of $\nabla^2\Ecal(\alpha,L)$ comes from the index $m_0$.
For this we see that the sum 
\[
  \sum_{i = 1}^{e} \left(c_{i,m_0}\frac{\alpha^2}{2}\left(\lambda_{i,k}n(n+2)- 8 i^2 a_i \right)\right)
\]
collapses to a single summand, that for $i = m_0$.
This is because the basis $C_1,\ldots,C_e$ is in reduced row echelon form and
\[
  e = \dim(\Scal_{n/2 +4}(\Gamma_0(\ell),\chi)) > \dim(\Scal_{n/2}(\Gamma_0(\ell),\chi)) = m_0 -1,
\]
so only the cusp form $C_{m_0}$, which has a Fourier expansion of the form $q^{m_0} + O(q^{e+1})$, contributes.

With the above and the estimate provided in Lemma~\ref{lem:approx:tail}, we see that the dominating summand in \eqref{eq:hessian:eigenvalues} is of the form
\[
\frac{1}{n(n+2)}  \left(\frac{\alpha^2}{2} (\lambda_{m_0,k} n(n+2)) - 8 m_0^2 a_{m_0} \alpha  (n/2 + 1) \right) e^{-2\alpha m_0}.
\]

In particular, for large $\alpha$, this summand is strictly positive if $\lambda_{m_0,k}$ is strictly positive and the first summand is strictly negative if $\lambda_{m_0,k}$ vanishes.
As the quadratic form $Q_{2m_0}$ is a non-trivial sum of squares, the eigenvalues cannot be strictly negative, and some eigenvalue is always strictly positive. 
\end{proof}

\section{Explicit bounds on the coefficients of modular forms} \label{sec:bounds}
To make the results on the eigenvalues of the Hessian $\nabla^2 \Ecal(\alpha,L)$ usable for explicit computations we need explicit bounds for certain modular forms. 

For this we use the decomposition
\begin{equation} \label{eq:modularform:decomposition}
  \Mcal_k(\Gamma_0(\ell),\chi) = \Eis_k(\Gamma_0(\ell),\chi) \oplus \Scal_k(\Gamma_0(\ell),\chi),
\end{equation}
for the group $\Gamma_0(\ell)$ and a character $\chi$.
Here $\Eis_k(\Gamma_0(\ell),\chi)$ is the \emph{Eisenstein subspace}, which is the orthogonal complement of the subspace of cusp forms $\Scal_k(\Gamma_0(\ell),\chi)$ with respect to the Petersson inner product (c.f. \cite[Section 5.11]{Diamond2005}).

This decomposition allows us to decompose the theta series $\Theta_L$ of an extremal $\ell$-modular lattice into an Eisenstein part and a cusp form part and then use separate bounds on coefficients of Eisenstein series and cusp forms.

\subsection{Coefficients of Eisenstein series for $\Gamma_0(\ell)$} \label{sec:modularform:coeff}

  The previously cited definition of the Eisenstein subspace $\Eis_k(\Gamma_0(\ell),\chi)$ as the orthogonal complement of $\Scal_k(\Gamma_0(\ell),\chi)$ is non-constructive.
  Here we make use of a more explicit description in terms of a basis of $\Eis_k(\Gamma_0(\ell),\chi)$, where we can explicitly describe the Fourier coefficients of its members.

  We claim that 
  \begin{equation} \label{eq:eisenstein:basis:gen}
    \Eis_k(\Gamma_0(\ell)) = \operatorname{span}\lset E_k(\tau), E_k(\ell\tau) \rset,
  \end{equation}
  where $E_k$ is the normalized Eisenstein series for $\SL_2(\Z)$ of weight $k$.
  One particular easy way to justify this claim is to use that the dimension formula for $\Eis_k(\Gamma_0(\ell))$ to see that this is a $2$-dimensional space for any prime $\ell$ and then check that both (obviously linearly independent) elements above are modular forms for $\Gamma_0(\ell)$.

  From \eqref{eq:eisenstein:basis:gen} we see that the coefficients of both basis elements can be trivially bounded by the coefficients of $E_k = \sum_{n \geq 0} b_n q^n$. 
  For the latter we use the explicit form $b_n = -\frac{2k}{B_k}\sigma_{k-1}(n)$.
  Using the upper bound $\sigma_{k-1}(n) \leq \zeta(k-1)n^{k-1}$, where $\zeta$ is the Riemann zeta function, we get 
  \begin{equation} \label{eq:eisenstein:coeff:bound}
      \vert b_n \vert = \left\vert -\frac{2k}{B_k}\sigma_{k-1}(n)\right\vert \leq \left\vert -\frac{2k}{B_k}\zeta(k-1)n^{k-1}\right\vert.
  \end{equation}

  For applications to lattices it is more useful to use a modified basis for $\Eis(\Gamma_0(\ell))$.
  We use the basis consisting of
  \begin{equation} \label{eq:eisenstein:basis}
    E^{(1)}_k(\tau) = E_k(\ell\tau) \quad \text{and} \quad E^{(2)}_k(\tau) = \frac{1}{b_1}\left(E_k(\ell\tau) - E_k(\tau) \right),
  \end{equation} 
  where $b_1$ is the coefficient of $q$ in $E_k$.
  The coefficients of $E_k^{(1)}$ and $E^{(2)}_k$ can be bounded (very wastefully) in terms of \eqref{eq:eisenstein:coeff:bound}.
  For this write 
  \begin{equation*}
      E^{(1)}_k(\tau) = \sum_{m = 0}^{\infty} b^{(1)}_m q^m, \quad \text{and} \quad
      E^{(2)}_k(\tau) = \sum_{m = 0}^{\infty} b^{(2)}_m q^m,
  \end{equation*}
 where explicitly $b^{(1)}_{3m} = b_{m}$ and $ b^{(2)}_m = \frac{1}{b_1}\left( b_{\ell m} - b_m \right)$.  From this we get 
  \begin{equation} \label{eq:coeff:bound:Eisenstein}
    |b^{(1)}_m|, |b^{(2)}_m| \leq  | b_m | \leq \left\vert-\frac{2k}{B_k}\zeta(k-1)n^{k-1}\right\vert.
  \end{equation}

  We include a second justification of \eqref{eq:eisenstein:basis:gen} which uses a bit more of a  modular forms background.
  The advantage of this is that it works in the presence of a non-trivial character $\chi$ and that it would allow obtaining more rigorous bounds on the coefficients of Eisenstein series, if that was needed.
  For this we construct a basis of $\Eis_k(\Gamma_0(\ell),\chi)$.
  By \cite[Theorems 4.5.1 and 4.5.2]{Diamond2005} such a basis is given by 
  \[
    \lset E_k^{\psi,\varphi,t} : (\psi,\varphi,t) \in A_{N,k}, \psi\varphi = \chi \rset,
  \]
  where $A_{N,k}$ is the set of triples $(\psi,\varphi,t)$ such that $\psi$ and $\varphi$ are primitive\footnote{Recall that a Dirichlet character $\chi$ is \emph{primitive} modulo $N$ if its conductor is $N$, i.e. if $\chi$ is not the lift of some Dirichlet character for a divisor of $N$.} Dirichlet characters modulo $u$ and $v$, such that $(\psi \varphi)(-1) = (-1)^k$ and $t \in \Z_{>0}$ such that $tuv \mid \ell$.

  Here, $E_k^{\psi,\varphi,t}(\tau) = E_k^{\psi,\varphi}(t\tau)$, and the Fourier expansion of $E_k^{\psi,\varphi}$ is 
  \begin{equation} \label{eq:fourier:expansion:eisenstein}
    E_k^{\psi,\varphi} = \delta(\psi)L(1-k,\varphi) + 2 \sum_{n=1}^\infty \sigma_{k-1}^{\psi,\varphi}(n)q^n,
  \end{equation}
  where $\delta(\psi) = 1$ if $\psi$ is the trivial Dirichlet character modulo $1$ and $0$ otherwise. 
  Furthermore, we use the generalized divisor power sum 
  \begin{equation} \label{eq:generalized:divisor-power-sum}
    \sigma_{k-1}^{\psi,\varphi}(n) = \sum_{\substack{m \mid n \\ m>0}} \psi(n/m)\varphi(m) m^{k-1}.
  \end{equation}

  If $\chi$ is the trivial character modulo $\ell$, there is only one way to write $\chi$ as the product $\chi = \psi\varphi$ (the product understood modulo $\ell$) of characters modulo $u$ and $v$, namely if both $\psi$ and $\varphi$ are the trivial character modulo $1$\footnote{While the trivial character modulo $1$ is primitive, no trivial character modulo $N>1$ is primitive, as the conductor is always $1$.}. 
  So if $\ell$ is a prime this tells us that $\dim(\Eis_k(\Gamma_0(\ell))) = 2$ and a basis is indexed by the triples $(1_1,1_1,1)$ and $(1_1,1_1,\ell)$.
  Finally, we observe
  \[
    E_k^{1_1,1_1}(\tau) = E_k(\tau) \quad \text{and} \quad E_k^{1_1,1_1,\ell}(\tau) = E_k(\ell \tau).
  \]
  
\subsection{Coefficients of cusp forms for $\Gamma_0(2)$ and $\Gamma_0(3)$} \label{sec:cuspform:coeff}

We shortly describe an approach to bound the coefficients of cusp forms based on a decompostion into Hecke eigenforms and an application of Deligne's theorem \cite{Deligne1974}.
This approach was also used by de Courcy-Ireland, Dostert, and Viazovska \cite{deCourcyIreland:sds:2024}.

For this, suppose that $C' \in \Scal_{k}(\Gamma_0(\ell),\chi)$ is a cusp form and that we have a decomposition 
\begin{equation} \label{eq:hecke}
    C' = \lambda_1 C_1 + \cdots + \lambda_k C_k
\end{equation}
into distinct Hecke eigenforms $C_1,\ldots,C_k$.
For background on Hecke eigenforms, and a discussion on the existence of a basis of simultaneous eigenforms, we refer to Chapter 5 in \cite{Diamond2005}.
The challenge now is to assert that a given decomposition \eqref{eq:hecke} is in fact a decomposition into Hecke eigenforms, we will assert that on a case by case basis where needed.

Once a decomposition of the form \eqref{eq:hecke} is available, we can use the following bound on each $C_i$.
Let $C(\tau) = \sum_{m=1}^\infty c_m q^m \in \Scal_{k}(\Gamma_0(\ell),\chi)$ be a Hecke eigenform, such that the first non-zero $c_m$ is equal to $1$. 
Then by Deligne's theorem
\begin{equation}\label{eq:deligne}
    |c_m| \leq d(m)m^{(k-1)/2} \leq 2 m^{k/2},
\end{equation}
where we use the (rough) bound $d(m) \leq 2 \sqrt{m}$.
Note that this latter bound can be strengthened (for large $m$), but we do not require the additional precision at this point.

As a closing remark we mention that for cases where a decomposition of a specific cusp form $C$ into Hecke eigenforms cannot be computed, one can still use a general-purpose estimate for cusp forms of $\Gamma_0(2)$ and $\Gamma_0(3)$, if no character is present. These bounds are provided by Jenkins and Pratt \cite{Jenkins2015} for $\Gamma_0(2)$, and Choi and Im \cite{Choi2018} for $\Gamma_0(3)$.
Similarly, such global bounds exist for $\SL_2(\Z)$, provided by Jenkins and Rouse \cite{Jenkins2011a}, which were used in \cite{Heimendahl2023} to bound coefficients of cusp forms in the context of energy minimization for even unimodular lattices.
In a first version of this article, this was the approach we used, until the above approach was suggested to us, independently, by Matthew de Courcy-Ireland and the anonymous referee.

\section{Investigation of modular lattices} \label{sec:results}

We now apply the strategy outlined in Section \ref{sec:Strategy} to extremal (strongly) modular lattices of level $2$ and $3$. 
One main reason to investigate such lattices in more detail is that these include two very well-known lattices:
For one the $3$-modular Coxeter-Todd lattice in dimension $12$ and the $2$-modular Barnes-Wall lattice in dimension $16$, both of which are the densest known sphere packings in their respective dimensions.

We provide explicit examples of the (local) behavior of the energy function $\Ecal(\alpha,L)$ for certain energy levels $\alpha$ and extremal $2$ and $3$-modular lattices. 
For this analysis we will need to compute the eigenvalues of the Hessian $\nabla^2 \Ecal(\alpha,L)$ and then evaluate certain resulting infinite series explicitly. 
To this end, we will use the explicit bounds on the coefficients of the theta series of $L$, which we obtained in Section \ref{sec:bounds}.

\subsection{On extremal $2$ and $3$-modular lattices in small dimensions} \label{sec:2-3-modular}

We now collect some information on extremal $2$ and $3$-modular lattices in small dimensions. 
To be precise, extremal $2$-modular lattices have been completely classified in dimensions $\leq 20$ and extremal $3$-modular lattices have been completely classified  in dimensions $\leq 18$.
Data on these lattices is readily available through the ``catalogue of lattices'' \cite{Nebe}. 

To compute the eigenvalues of the Hessian \eqref{eq:Hessian} for such a lattice $L$, we need to know the design strength of the shells $L$ (c.f.\ Section \ref{sec:spherical:shells}). 
If the design strength is not at least $4$, we need additional information to be able to apply Theorem \ref{thm:maintheorem}.
For this, recall that for such $L$ the theta series with spherical coefficients $\Theta_{L,p}$ for a spherical polynomial of degree $d$ is a modular form in the space $\Scal_{n/2 + d}(\Gamma_0(\ell),\chi)$.
To apply Theorem \ref{thm:maintheorem} we specifically need information on the case $d = 4$, and so we collect whether $\chi$ is trivial (if and only if $n = 0 \mod 4$), and therefore $\Scal_{n/2+4}(\Gamma_0(\ell),\chi) = \Scal_{n/2+4}(\Gamma_0(\ell))$, or not.
In addition, we collect the dimension of that space if $\chi$ is indeed trivial.

The relevant data is presented in Tables \ref{tab:modular:lattices:2} and \ref{tab:modular:lattices:3} for $2$ and $3$-modular lattices separately.
In both cases, we record for each dimension the \emph{class number}, i.e. number of non-isometric extremal lattices, the associated space of cusp forms, its dimension, whether $\chi$ is trivial, and the minimum guaranteed design strength on the shells of such a lattice, see Bachoc and Venkov \cite{Bachoc2001}.

\begin{table}\renewcommand{\arraystretch}{1.1}
  \begin{tabular}{c c c c c }\hline
    dimension & class number & $\chi$ trivial? & $\dim(\Scal_{n/2 + 4}(\Gamma_0(2),\chi))$ & design strength \\ \hline \rule{0pt}{4mm}
    $4$ & $1$ & yes & $0$ & $\geq 5$ \\
    $8$ & $1$ & yes & $1$ &  $\geq 3$ \\
    $12$ & $3$ & yes & $1$ & $\geq 1$ \\
    $16$ & $1$ & yes & $2$ & $\geq 7$ \\
    $20$ & $3$ & yes & $2$ & $\geq 5$\\ \hline
  \end{tabular}
  \medskip
  \caption{Data on $2$-modular extremal lattices in small dimensions.}
  \label{tab:modular:lattices:2}
\end{table}

\begin{table}\renewcommand{\arraystretch}{1.1}
  \begin{tabular}{c c c c c }\hline
    dimension & class number & $\chi$ trivial? & $\dim(\Scal_{n/2 + 4}(\Gamma_0(3),\chi))$ & design strength \\ \hline \rule{0pt}{4mm}%
    $2$ & $1$ & no & $0$ & $\geq 5$ \\
    $4$ & $1$ & yes & $1$ &  $\geq 3$ \\
    $6$ & $1$ & no & $1$ & $\geq 3$ \\
    $8$ & $2$ & yes & $1$ & $\geq 1$ \\
    $10$ & $3$ & no & $2$ & $\geq 1$ \\
    $12$ & $1$ & yes & $2$ & $\geq 5$ \\
    $14$ & $1$ & no & $2$ &  $\geq 5$ \\
    $16$ & $6$ & yes & $3$ & $\geq 3$ \\
    $18$ & $37$ & no & $3$ & $\geq 3$  \\\hline
  \end{tabular}
  \medskip
  \caption{Data on $3$-modular extremal lattices in small dimensions.}
  \label{tab:modular:lattices:3}
\end{table}

\subsection{The Coxeter-Todd lattice}

The Coxeter-Todd lattice, often denoted $K_{12}$ first appeared in \cite{Coxeter1953}.
It is an extremal $3$-modular lattice in dimension $12$, and it is the densest known sphere packing in this dimension at the time of writing this article.
For more information on this lattice we refer to \cite[Ch. 4.9]{Conway1988a}.

The theta series of $K_{12}$ is  
\begin{equation*}
    \Theta_{K_{12}}(\tau) = 1 + 756q^2 + 4032q^3 + 20412q^4 + 60480q^5 + 139860q^6 + O(q^7)
\end{equation*}
and is a modular form for $\Gamma_0(3)$ of weight $6$. 
For any $m \geq 1$, the Coxeter-Todd lattice contains an element of norm $2m$ and the corresponding shell $K_{12}(2m)$ is a spherical $5$-design.
Therefore, by Lemma \ref{lem:Hessian:4design}, all eigenvalues of the Hessian $\nabla^2 \Ecal(\alpha,K_{12})$ coincide.

\begin{proposition} \label{prop:CT}
The Coxeter-Todd lattice $K_{12}$ is locally universally optimal among lattices.
\end{proposition}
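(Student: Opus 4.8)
The plan is to show that $K_{12}$ is a strict local minimum of $\Ecal(\alpha, \cdot)$ for every $\alpha > 0$, which by the definition of local universal optimality among lattices (together with the density-$1$ rescaling discussed around \eqref{eq:rescaling:energy}) is exactly what we need. Since every shell $K_{12}(2m)$ is a spherical $5$-design, and in particular a $4$-design, I am in the favorable situation of Lemma~\ref{lem:Hessian:4design}: the gradient vanishes (all shells are at least $2$-designs), and all eigenvalues of the Hessian coincide and equal the single scalar in \eqref{eq:Hessian:4design}. So the entire problem reduces to proving that this one expression is strictly positive for all $\alpha > 0$.

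Concretely, with $n = 12$ I must show
\begin{equation*}
  S(\alpha) := \sum_{m \geq 1} a_m\, \alpha (2m) \bigl(\alpha(2m) - 7\bigr) e^{-2\alpha m} > 0
  \quad \text{for all } \alpha > 0,
\end{equation*}
where $a_m = |K_{12}(2m)|$ are the theta coefficients and I have substituted $r^2 = 2m$ and $n/2 + 1 = 7$. The first step is to isolate the leading term $m = 1$ (recall $a_1 = 756$), which contributes $756\cdot 2\alpha(2\alpha - 7)e^{-2\alpha}$. This single term is already positive once $\alpha > 7/2$, so the genuine work is confined to the range $0 < \alpha \leq 7/2$ and to controlling the infinite tail. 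The terms with $2\alpha m \geq 7$, i.e. $m \geq 7/(2\alpha)$, are individually nonnegative and can be discarded from a lower bound; the dangerous negative contributions come only from the finitely many indices $m < 7/(2\alpha)$.

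The main obstacle, and the step requiring the explicit bounds of Section~\ref{sec:bounds}, is to prove positivity uniformly on the compact-after-truncation regime. My approach is to split $\Theta_{K_{12}}$ via \eqref{eq:modularform:decomposition} into its Eisenstein part and its cusp part, bound the cusp coefficients by Deligne's estimate \eqref{eq:deligne} and the Eisenstein coefficients by \eqref{eq:coeff:bound:Eisenstein}, and thereby obtain an explicit polynomial majorant $a_m \leq P(m)$ for all $m$. Using $P(m)$ I can bound the absolute value of the tail $\sum_{m \geq M} |a_m| \alpha(2m)|\alpha(2m) - 7| e^{-2\alpha m}$ by an explicit expression that decays like $e^{-2\alpha M}$ times a polynomial in $1/\alpha$; choosing $M$ a few units beyond $7/(2\alpha)$ makes this tail strictly dominated by the positive leading terms. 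The remaining finitely many values of $\alpha$ in a compact interval, say $[\alpha_0, 7/2]$ bounded away from $0$, are then handled by a rigorous finite computation—evaluating the truncated sum plus a certified tail bound on a fine grid, or by a short interval-arithmetic check—to confirm $S(\alpha) > 0$ throughout.

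The one subtlety I would flag for the small-$\alpha$ boundary: as $\alpha \to 0^+$ every term $\alpha(2m)(\alpha(2m)-7)$ behaves like $-14\alpha m$, so $S(\alpha) \sim -14\alpha \sum_m m\, a_m e^{-2\alpha m}$, which naively looks negative. The resolution is that this is precisely the regime where the theta-series transformation under the Fricke involution (the $\ell$-modularity, $K_{12} \cong \sqrt{3}\,K_{12}^\#$) forces the correct sign: modularity relates small-$\alpha$ to large-$\alpha$ behavior and guarantees the scalar remains positive, reflecting that $K_{12}$ is known to be \emph{extreme} in the geometric sense and hence a strict local maximum of the packing density, the $\alpha \to \infty$ limit. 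I would therefore use the functional equation for $\Theta_{K_{12}}$ to reduce the small-$\alpha$ analysis to the large-$\alpha$ analysis, rather than attempting a direct termwise estimate near $\alpha = 0$; this is the crux, and once the two asymptotic regimes are dispatched the intermediate compact range is routine.
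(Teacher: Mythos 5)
Your overall skeleton is the same as the paper's: reduce everything to the single Hessian eigenvalue via Lemma~\ref{lem:Hessian:4design} (all shells of $K_{12}$ are $5$-designs), prove termwise positivity of that eigenvalue for large $\alpha$, and transfer small $\alpha$ to large $\alpha$ via Poisson summation / the $3$-modularity of $K_{12}$. However, there is a concrete factual error that derails the middle of your argument: you read off $a_1 = 756$. In fact $\Theta_{K_{12}} = 1 + 756q^2 + 4032q^3 + \cdots$, so $a_1 = |K_{12}(2)| = 0$ (the Coxeter--Todd lattice has minimum $4$ and contains no roots), and $756$ is $a_2$. This is not a cosmetic slip, because the vanishing of $a_1$ is precisely what makes the proposition easy: in the sum $\sum_{m\geq 1} a_m\, 2\alpha m\,(2\alpha m - 7)\,e^{-2\alpha m}$ the factor $2\alpha m - 7$ is nonnegative for every $m \geq 2$ as soon as $\alpha \geq 7/4$, and the only index where it could be negative, $m=1$, has vanishing coefficient. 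Hence for $\alpha \geq 7/4$ (in particular for $\alpha \geq \pi$, which is what the paper states) every summand is nonnegative and the $m=3$ summand is strictly positive. No Eisenstein/cusp splitting, no Deligne bound \eqref{eq:deligne}, no tail estimate, and no interval-arithmetic certification on a compact window are needed at all.

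Because you believe $a_1 = 756 \neq 0$, you manufacture a genuinely negative term for $\alpha < 7/2$ and are then forced to propose a certified numerical verification on a compact interval $[\alpha_0, 7/2]$; that verification is the actual crux of your proof and you leave it as ``routine,'' so as written the proposal is incomplete---and, once the coefficient is corrected, unnecessary. With $a_1 = 0$ the two regimes you already have overlap with no gap: termwise positivity holds for $\alpha \geq 7/4$, while duality for a $3$-modular lattice sends $\alpha$ to $\pi^2/(3\alpha)$ (self-dual point $\pi/\sqrt{3} \approx 1.81 > 7/4$), so it maps $(0, \pi/\sqrt{3}\,]$ into $[\pi/\sqrt{3}, \infty) \subset [7/4,\infty)$, covering all $\alpha > 0$ with no intermediate interval left over. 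Your final observation---that termwise estimates must fail as $\alpha \to 0^+$ and that the Fricke involution is the correct fix, consistent with $K_{12}$ being extreme---is sound and is exactly the paper's Poisson summation step; note only that the reduction lands you at the dual parameter $\pi^2/(3\alpha)$, not at an arbitrary large value, so you should check (as above) that the dual range really sits inside the region where termwise positivity has been established.
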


\begin{proof}

The eigenvalues of the Hessian $\nabla^2 \Ecal(\alpha,K_{12})$ given by
\begin{equation} \label{eq:CT:EV}
  \frac{1}{168} \sum_{m=0}^{\infty}  a_m  2\alpha m (2\alpha m - 7) e^{-2\alpha m} ,
\end{equation}
where $a_m = \vert K_{12}(2m)\vert$ is the $m$-th coefficient of the theta series of $K_{12}$.
Note that the summands for $m=0$ and $m=1$ vanish, the latter because $a_1 = \vert K_{12}(2) \vert = 0$, as the Coxeter-Todd lattice does not contain roots.

Now, if $\alpha \geq \pi$, each summand of \eqref{eq:CT:EV} is positive, the only factor that could become negative is $(2\alpha m - 7)$, but this only happens if $m<2$ by
\begin{equation*}
  2\alpha m - 7  < 0 \quad \Leftrightarrow \quad m < \frac{7}{2 \alpha} \leq \frac{7}{2 \pi} < 1.2.
\end{equation*}

If $\alpha < \pi$ we can use Poisson summation, to reduce to the case above.

\end{proof}

\subsection{The Barnes-Wall lattice}
The $16$-dimensional Barnes-Wall lattice, often denoted $\operatorname{BW}_{16}$ or $\Lambda_{16}$ first appeared in \cite{Barnes1959}.
It is an extremal $2$-modular lattice in dimension $16$ and the densest known sphere packing in this dimension at the time of writing this article.
For more information on this lattice we refer to \cite[Ch. 4.10]{Conway1988a}.

\smallskip

The theta series of $\operatorname{BW}_{16}$ is
\begin{align*}
  \Theta_{\operatorname{BW}_{16}} =1 + 4320q^2 + 61440q^3 + 522720q^4 + 2211840q^5 + 8960640q^6 + O(q^7)
\end{align*}
and is a modular form for $\Gamma_0(2)$ of weight $8$. 
For any $m \geq 1$ the Barnes-Wall lattice contains an element of norm $2m$ and the corresponding shell $\operatorname{BW}_{16}(2m)$ is a spherical $7$-design.
Therefore, by Lemma \ref{lem:Hessian:4design}, all eigenvalues of the Hessian $\nabla^2 \Ecal(\alpha,\operatorname{BW}_{16})$ coincide.

\begin{proposition} \label{prop:BW}
  The Barnes-Wall lattice $\operatorname{BW}_{16}$ is locally universally optimal among lattices.
\end{proposition}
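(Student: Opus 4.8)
The plan is to follow the template of the Coxeter--Todd case (Proposition \ref{prop:CT}), with one extra wrinkle. By Lemma \ref{lem:Hessian:4design}, since every shell of $\operatorname{BW}_{16}$ is a spherical $7$-design (hence a $4$-design), the Hessian $\nabla^2 \Ecal(\alpha,\operatorname{BW}_{16})$ is a scalar multiple of the identity on traceless symmetric matrices, with common eigenvalue
\[
  \phi(\alpha) = \frac{1}{288}\sum_{m \geq 1} a_m\, 2\alpha m\,(2\alpha m - 9)\, e^{-2\alpha m}, \qquad a_m = |\operatorname{BW}_{16}(2m)|,
\]
because here $n=16$, so $n(n+2)=288$ and $n/2+1 = 9$. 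As all shells are in particular $2$-designs, the gradient vanishes and $\operatorname{BW}_{16}$ is critical for every $\alpha$; hence local universal optimality among lattices is equivalent to $\phi(\alpha) > 0$ for all $\alpha > 0$. For the large-$\alpha$ regime I would argue exactly as in Proposition \ref{prop:CT}: the first nonzero coefficient is $a_2 = 4320$ (the minimal squared norm is $4$ and $\operatorname{BW}_{16}$ has no roots), so the only factor that can be negative is $(2\alpha m - 9)$, and for $\alpha \geq 9/4$ one has $2\alpha m - 9 \geq 4\alpha - 9 \geq 0$ for every $m \geq 2$. Thus each summand is nonnegative and $\phi(\alpha) > 0$ whenever $\alpha \geq 9/4$.

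To cover small $\alpha$ I would exploit $2$-modularity. Poisson summation applied to $\operatorname{BW}_{16} \cong \sqrt 2\, \operatorname{BW}_{16}^\#$ (equivalently, the Fricke invariance of the weight-$8$ form $\Theta_{\operatorname{BW}_{16}}$) yields, writing $\Theta_{\operatorname{BW}_{16}}(\alpha) = \sum_x e^{-\alpha\|x\|^2}$, the functional equation
\[
  \Theta_{\operatorname{BW}_{16}}(\alpha) = \frac{1}{16}\Big(\frac{\pi}{\alpha}\Big)^{8}\, \Theta_{\operatorname{BW}_{16}}\!\Big(\frac{\pi^2}{2\alpha}\Big).
\]
Passing to the logarithmic variable $t$ with $\alpha = (\pi/\sqrt 2)\,e^{t}$ symmetrizes the involution $\alpha \mapsto \pi^2/(2\alpha)$ into $t \mapsto -t$, whose fixed point is the self-dual value $\alpha_0 = \pi/\sqrt 2$. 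A direct substitution then rewrites $\phi$ as a second-order differential expression in $t$ applied to an even function, and produces the clean sign-preserving identity
\[
  \phi\!\Big(\frac{\pi^2}{2\alpha}\Big) = \Big(\frac{\alpha\sqrt 2}{\pi}\Big)^{8}\phi(\alpha).
\]
Because the prefactor is positive, $\phi(\alpha)$ and $\phi(\pi^2/(2\alpha))$ always share their sign; since the involution swaps $(0,\pi/\sqrt 2)$ with $(\pi/\sqrt 2,\infty)$, it suffices to prove $\phi(\alpha) > 0$ on $[\pi/\sqrt 2,\infty)$.

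The main obstacle is precisely a mismatch that does \emph{not} arise for Coxeter--Todd: there the positivity threshold $7/4$ lies below the self-dual value $\pi/\sqrt 3$, so the reduction lands entirely in the range where every summand is nonnegative, whereas here the threshold $9/4 \approx 2.25$ lies slightly above the self-dual value $\pi/\sqrt 2 \approx 2.221$. This leaves the narrow window $\alpha \in [\pi/\sqrt 2,\, 9/4)$ uncovered by the naive argument, and on it the $m=2$ summand is (slightly) negative. I would close this gap by a direct comparison with the $m=3$ summand, using only the exact coefficients $a_2 = 4320$ and $a_3 = 61440$: on this interval $4\alpha - 9 < 0$ while $6\alpha - 9 > 0$, and the inequality
\[
  a_3\, 6(6\alpha - 9) \;\geq\; a_2\, 4(9 - 4\alpha)\, e^{2\alpha}
\]
holds throughout $[\pi/\sqrt 2,\, 9/4]$ (a comparison of two explicit continuous functions on a short compact interval, with a comfortable margin at both endpoints, so it can be made rigorous by monotone bounds on each factor). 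Multiplying back by $e^{-6\alpha}$ and by $\alpha$ shows the $m=3$ term dominates the absolute value of the $m=2$ term, while all remaining summands are positive; hence $\phi(\alpha) > 0$ on the window as well. Combined with the large-$\alpha$ estimate and the functional equation, this gives $\phi > 0$ for all $\alpha > 0$, so $\operatorname{BW}_{16}$ is a strict local minimum of $\Ecal(\alpha,\cdot)$ for every $\alpha$, i.e.\ locally universally optimal among lattices. I expect the only genuinely delicate point to be verifying the displayed inequality on the short interval; everything else is parallel to Proposition \ref{prop:CT}.
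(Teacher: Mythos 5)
Your proposal is correct, and its skeleton---one common Hessian eigenvalue via Lemma \ref{lem:Hessian:4design}, termwise positivity for large $\alpha$, Poisson summation plus $2$-modularity for small $\alpha$---is exactly the paper's. The substantive difference is that the paper dispatches the small-$\alpha$ regime in one sentence (``If $\alpha < \pi$ we can use Poisson summation, to reduce to the case above''), and read literally this does not close the argument: for a $2$-modular lattice the involution supplied by Poisson summation is $\alpha \mapsto \pi^2/(2\alpha)$, with fixed point $\pi/\sqrt{2} \approx 2.221$, so combining it with the termwise-positivity range $[9/4,\infty)$ covers only $(0,\,2\pi^2/9\,] \cup [\,9/4,\infty)$ and leaves the involution-invariant window $(2\pi^2/9,\,9/4)$ untouched. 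This is precisely the mismatch you identified, and you are right that it genuinely does not occur for Coxeter--Todd, where $7/4 < \pi/\sqrt{3}$; in this sense your write-up repairs a step the paper glosses over. Your patch is also the natural one and it works: your sign-preserving functional equation $\phi(\pi^2/(2\alpha)) = (\alpha\sqrt{2}/\pi)^8\,\phi(\alpha)$ is exactly right (it follows from $\theta(\alpha)=\tfrac{1}{16}(\pi/\alpha)^8\theta(\pi^2/(2\alpha))$ and the fact that $\phi$ is $\tfrac{1}{288}E(E+8)\theta$ for the Euler operator $E=\alpha\,d/d\alpha$), and the comparison inequality on $[\pi/\sqrt{2},\,9/4]$ holds with a large margin, since
\[
  a_3\,6(6\alpha-9) \;\geq\; 368640\,(3\sqrt{2}\pi-9) \;>\; 1.5\cdot 10^{6},
  \qquad
  a_2\,4(9-4\alpha)e^{2\alpha} \;\leq\; 17280\,(9-2\sqrt{2}\pi)\,e^{9/2} \;<\; 1.8\cdot 10^{5},
\]
so crude monotone bounds on each factor suffice, as you anticipated. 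Two minor remarks: the prefactor in the paper's display \eqref{eq:BW:EV} should be $1/288$ (as you have it), not $1/168$; and positivity of $\phi$ for all $\alpha$ is \emph{sufficient} for local universal optimality among lattices rather than equivalent to it, but sufficiency is all your argument uses.
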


\begin{proof}
  The proof is essentially the same as for the Coxeter-Todd lattice in Proposition \ref{prop:CT}.

  The eigenvalues of the Hessian $\nabla^2 \Ecal(\alpha,\operatorname{BW}_{16})$ given by
  \begin{equation} \label{eq:BW:EV}
    \frac{1}{168} \sum_{m=0}^{\infty}  a_m  2\alpha m (2\alpha m - 9) e^{-2 \alpha m} ,
  \end{equation}
  where $a_m = \vert \operatorname{BW}_{16}(2m)\vert$ is the $m$-th coefficient of the theta series of $\operatorname{BW}_{16}$.
  Note that the summands for $m=0$ and $m=1$ vanish, the latter because $a_1 = \vert \operatorname{BW}_{16}(2) \vert = 0$, as the Barnes-Wall lattice does not contain roots.

  Now, if $\alpha \geq \pi$, each summand of \eqref{eq:BW:EV} is positive, the only factor that could become negative is $(2\alpha m - 9)$, but this only happens if $m<2$ by
  \begin{equation*}
    2\alpha m - 9  < 0 \quad \Leftrightarrow \quad m < \frac{9}{2 \alpha} \leq \frac{9}{2 \pi} < 1.5.
  \end{equation*}

  If $\alpha < \pi$ we can use Poisson summation, to reduce to the case above.
\end{proof}

\subsection{Local universal optimality of extremal $\ell$-modular lattices} \label{sec:loc:univ}

We briefly note that the results in the preceding sections regarding the local universal optimality among lattices of the Coxeter-Todd lattice and the Barnes-Wall lattice readily extend as follows.

\begin{theorem} \label{thm:ellmodular:loc}
  Let $L$ be an extremal $\ell$-modular lattice for which all shells are $4$-designs. Then $L$ is locally universally optimal among lattices if 
  $\ell \geq  3$, or if $\ell = 2$ and $\dim(L) \in \lset 4, 16, 20, 32, 48 \rset$.
\end{theorem}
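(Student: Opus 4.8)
The plan is to reduce the whole statement to the positivity of a single number. Since all shells are $4$-designs, Lemma~\ref{lem:Hessian:4design} shows that the Hessian of $\Ecal(\alpha,L)$ is a scalar multiple of the identity on the space of traceless symmetric matrices, with common eigenvalue
\[
\lambda(\alpha)=\frac{1}{n(n+2)}\sum_{m\ge 1} a_m\,2\alpha m\bigl(2\alpha m-(n/2+1)\bigr)e^{-2\alpha m},\qquad a_m=\lvert L(2m)\rvert .
\]
The gradient already vanishes because the shells are $2$-designs, so $L$ is critical for every $\alpha$. By Bernstein's theorem every completely monotonic potential is a nonnegative superposition of Gaussians, so its Hessian at $L$ is the corresponding superposition of the operators $\lambda(\alpha)I$. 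Hence it suffices to prove $\lambda(\alpha)>0$ for all $\alpha>0$: this makes $L$ a nondegenerate, and therefore strict, local minimum for every Gaussian and consequently for every completely monotonic potential, which is exactly local universal optimality among lattices.

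The key structural input is the Fricke functional equation. Writing $\tau=i\alpha/\pi$ and $t=\alpha/\pi$, the invariance of $\Theta_L$ under $W_\ell$ together with $\chi(W_\ell)=i^{k}$, where $k=n/2$, restricts on the imaginary axis to
\[
\Theta_L\bigl(\tfrac{i}{\ell t}\bigr)=\ell^{k/2}t^{k}\,\Theta_L(it).
\]
Setting $g(t)=\Theta_L(it)$ and $\Lambda(t)=t^{k/2}g(t)$, this reads $\Lambda(1/(\ell t))=\Lambda(t)$. A direct computation with $\theta_t=t\,d/dt$ gives $\lambda(\alpha)=\frac{1}{n(n+2)}\,t^{-k/2}\bigl(\theta_t^{2}-\tfrac{k^2}{4}\bigr)\Lambda(t)$, and after the substitution $t=\ell^{-1/2}e^{v}$, under which $\theta_t=\partial_v$ and $t\mapsto 1/(\ell t)$ becomes $v\mapsto -v$, the bracket becomes $\bigl(\partial_v^{2}-\tfrac{k^2}{4}\bigr)\Psi(v)$ with $\Psi(v)=\Lambda(\ell^{-1/2}e^{v})$ an \emph{even} function. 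Therefore $\lambda(\alpha)$ has the same sign as $\lambda(\pi^2/(\ell\alpha))$, and it suffices to prove $\lambda(\alpha)>0$ on the half-line $\alpha\ge \pi/\sqrt{\ell}$ determined by the fixed point of this involution. This is the precise form of the ``Poisson summation'' reduction already invoked in Propositions~\ref{prop:CT} and~\ref{prop:BW}.

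On $\alpha\ge \pi/\sqrt{\ell}$ I would proceed as follows. Let $2m_0$ be the minimum of $L$. As soon as $\alpha\ge (n/2+1)/(2m_0)$, every factor $2\alpha m-(n/2+1)$ with $a_m\ne 0$ is nonnegative and the $m_0$-term is strictly positive, so $\lambda(\alpha)>0$ term by term, exactly as for the Coxeter-Todd lattice. It remains to treat the window $\pi/\sqrt{\ell}\le \alpha<(n/2+1)/(2m_0)$, which is nonempty only in the borderline cases. There the sole negative contribution is the leading $m=m_0$ term, and it is small; I would bound it against the positive contribution of the next shells, using the explicit Eisenstein lower bounds on $a_m$ together with the Deligne/Hecke bounds on the cuspidal part from Section~\ref{sec:bounds} to control the tail rigorously. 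This reduces the claim to a finite explicit numerical inequality for each pair $(\ell,n)$.

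The main obstacle is precisely this window estimate near the self-dual point $\alpha=\pi/\sqrt{\ell}$. For $\ell\ge 3$ the fixed point lies low enough relative to the extremal minimum that the estimate can be pushed through in every dimension where the shells are $4$-designs; for $\ell=2$ the window widens as $n$ grows and the leading negative term is eventually no longer dominated by the positive tail, so the inequality survives only in the finitely many dimensions $\dim(L)\in\lset 4,16,20,32,48\rset$. Carrying out these finitely many verifications, and establishing the uniform bound for $\ell\ge 3$, is the technical heart of the argument and is exactly what produces the dimension restriction in the statement.
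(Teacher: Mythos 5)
Your setup is correct, and in one respect sharper than the paper's own write-up: the scalar Hessian eigenvalue $\lambda(\alpha)$ from Lemma~\ref{lem:Hessian:4design}, criticality from $2$-designs, the Bernstein reduction to Gaussians, and the sign symmetry of $\lambda$ under $\alpha \mapsto \pi^2/(\ell\alpha)$ derived from the Fricke functional equation are all valid (the paper invokes this symmetry only tersely as ``Poisson summation'' with threshold $\pi$, whereas your fixed point $\pi/\sqrt{\ell}$ is the precise one). The genuine gap is that your argument stops exactly where the theorem's content lies: the dimension restriction ``$\ell \geq 3$, or $\ell = 2$ and $n \in \{4,16,20,32,48\}$'' is never \emph{derived}. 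You assert that the window estimates ``can be pushed through'' for $\ell \geq 3$ and ``survive only in'' those five dimensions for $\ell = 2$; that is assuming the conclusion. The paper produces the list by an explicit elementary count that is absent from your proposal: extremality forces the minimum $2m_0$ with $m_0 = 1 + \lfloor n/j(\ell) \rfloor$, where $j(\ell) = 48/(\ell+1)$ is the jump dimension, so the largest dimension with minimum $m$ is $j(\ell)m - 2$; term-by-term nonnegativity of the series for $\alpha \geq \pi$ requires $n < 4\pi m_0 - 2$; comparing the two yields the criterion $j(\ell) < 4\pi$, which holds precisely when $\ell \geq 3$ (then $j(\ell) \leq 12$), while for $\ell = 2$ (where $j(2) = 16 > 4\pi$) one enumerates $m_0 = 1,2,3,4$ to obtain the finite list. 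Without this mechanism you have no way of knowing for which pairs $(\ell,n)$ your ``finite explicit numerical inequality'' is supposed to succeed.

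A second, related defect: for $\ell \geq 3$ the theorem covers \emph{all} dimensions, so your plan of ``a finite explicit numerical inequality for each pair $(\ell,n)$'' is an infinite family of verifications, and the ``uniform bound for $\ell \geq 3$'' you defer is precisely the proof that is missing. To be fair, the difficulty you isolate is real: for instance for the Barnes--Wall lattice one has $\alpha_* = (n/2+1)/(2m_0) = 9/4 > \pi/\sqrt{2}$, so term-by-term positivity does not reach down to the fixed point, and the paper's one-line Poisson step glosses over this window; but flagging the obstacle is not resolving it, and your write-up resolves neither the window estimates nor the origin of the dimension list.
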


To apply this theorem we need to have (lower bounds) on the minimal design strength on the shells of extremal $\ell$-modular lattices.
There are two sources for this, as already discussed in Section \ref{sec:spherical:shells}.
First, by \cite[Corollary 4.1]{Bachoc2001} for all extremal $\ell$-modular lattices with $\ell = 2$ and $n \equiv 0,4 \mod 16$, or $\ell = 3$ and $n \equiv 0,2 \mod 12$, all shells form  spherical designs of strength at least $4$.
Second, by computing the harmonic Molien series and using the criterion in Theorem 3.12 in \cite{Goethals1981}, we can investigate all remaining cases in low dimensions case by case.
Combining the two approaches leads to a complete classification of all extremal $\ell$-modular lattices in low dimensions, it is collected in Table \ref{tab:loc:univ}.

\begin{proof}
  For this we again note that the unique eigenvalue of the Hessian is given by 
  \begin{equation*}
      \frac{1}{n(n+2)} \sum_{m = 1} \vert L(2m) \vert \alpha 2m (\alpha 2m - (n/2 + 1)) e^{-\alpha 2m}, 
  \end{equation*}
  as in \eqref{eq:Hessian:4design}.
  can only be negative if $\alpha 2m - (n/2+1)  < 0$.
  The minimum of $L$, that is the first $m>0$ for which $L(2m)$ is non-empty, is given by $1 + \lfloor \frac{n/2}{24/(\ell + 1)} \rfloor$ (see \cite{Quebbemann1995} or \cite{Scharlau1999}).
  Therefore, the minimum increases every time the dimension increases by one of the jump dimensions $j(\ell)$ below.
  \begin{center}\renewcommand{\arraystretch}{1.1}
    \begin{tabular}{l c c c c c c }\hline
      $\ell$ & $2$ & $3$ & $5$ & $7$ & $11$ & $23$ \\ \hline \rule{0pt}{4mm}%
      $j(\ell)$ & 16 & 12 & 8 & 6 & 4 & 2 \\\hline
    \end{tabular}
  \end{center}
  From this we get that for $\alpha \geq \pi$, the inequality $\alpha 2m - (n/2+1)  > 0$ is satisfied if and only if
  \begin{equation*}
    m > \frac{n/2 +1}{2 \pi} \quad \Longleftrightarrow \quad n < 4 \pi m - 2.
  \end{equation*}
  Now we can check for every $m$, which dimension, for which the minimum of $L$ is precisely $m$, satisfies this inequality.

  The largest dimension for which $L$ has minimum $m$ is at most $j(\ell) \cdot m - 2$.
  Plugging this in we obtain 
   \begin{equation*}
    j(\ell) \cdot m - 2 < 4 \pi m - 2 \quad \Longleftrightarrow j(\ell) < 4 \pi ,
  \end{equation*}
  where the latter is always true if $\ell \geq 3$, since then $j(\ell) \leq 12 < 4\pi$.

  If $\ell = 2$, this is the case precisely for $m = 1$ and $n \in \lset 4,8 \rset$, for $m = 2$ and $n \in \lset 16, 20 \rset$, for $m = 3$ and $n = 32$, and for $m = 4$ and $n = 48$.    

  As before, the proof can now be finished by utilizing Poisson summation, to reduce the case $\alpha < \pi$ to the case above.
\end{proof}

{\begin{table}\renewcommand{\arraystretch}{1.1}
  \begin{tabular}{c c c c c}\hline
    $\ell$ & dimension & class number & lattice & design strength \\ \hline \rule{0pt}{4mm}%
    $2$ & $4$ & $1$ & $D_4$ & $\geq 5$ \\
    $2$ & $16$ & $1$ & Barnes-Wall & $\geq 5$ \\
    $2$ & $20$ & $3$ & all & $\geq 5$ \\
    $3$ & $2$ & $1$ & $A_2$ & $\geq 5$  \\
    $3$ & $12$ & $1$ & Coxeter-Todd & $\geq 7$  \\
    $3$ & $14$ & $1$ & unique & $\geq 5$  \\
    $5$ & $16$ & $1$ & unique & $\geq 5$  \\
    \hline
  \end{tabular}
  \medskip
  \caption{Locally universally optimal extremal $\ell$-modular  lattices in dimensions at most $20$.}
  \label{tab:loc:univ}
\end{table}
}

\subsection{Extremal modular lattices without $4$-designs} \label{sec:2design}

We now illustrate how to determine the local behavior of extremal $2$ and $3$-modular lattices, when the shells are not necessarily $4$-designs, but are at least $2$ designs.
Concretely, we investigate an extremal $3$-modular lattice $L_{16}$ of dimension $16$, given explicitly as the third entry in ``\texttt{3\_dim16.dat}'' in \cite{Nebe}. 
For this lattice, all shells form at least $3$-designs, and, as we will check below, Theorem \ref{thm:maintheorem} is applicable.

Numerical computations indicate that this lattice, or any other $3$-modular lattice in dimension $16$ (and $18$), is a saddle point.
These numerical considerations can be turned into rigorous results.
We illustrate the method, which relies on the formula for the eigenvalues developed in Theorem \ref{thm:maintheorem}, and provide a rigorous proof that this lattice is a saddle point at the value $\alpha = 1$.

We stress that the method works more generally.
First, the choice $\alpha = 1$ only simplifies the write-up a bit.
Second, we choose this specific lattice, among the extremal $3$-modular lattices in dimension $16$ and $18$, because in this case the Hessian has only $3$ distinct eigenvalues, so there are fewer estimates to provide for a full investigation.

\smallskip

The theta series of $L_{16}$ is 
\begin{equation} \label{eq:theta:L16}
    \Theta_{L_{16}}(\tau) = 1 + 720q^2 + 13440q^3 + 97200q^4 + O(q^5),
\end{equation}
and it is a modular form of weight $8$ for $\Gamma_0(3)$. 
$\Mcal_8(\Gamma_0(3))$ decomposes into a direct sum of the $2$-dimensional subspace of Eisenstein series $\Eis_8(\Gamma_0(3))$ and the one dimensional cuspidal subspace $\Scal_8(\Gamma_0(3))$ (c.f. \eqref{eq:modularform:decomposition}).

\begin{proposition} \label{prop:L16}
  Let $L_{16}$ be the $16$-dimensional $3$-modular lattice which appears as the third entry in ``\texttt{3\_dim16.dat}'' in \cite{Nebe}.
  For every $\alpha > 0$, the Hessian $\nabla^2 \Ecal(\alpha,L_{16})$ has at most three distinct eigenvalues.

  For $\alpha = 1$, the Hessian $\nabla^2 \Ecal(1,L_{16})$, has one of positive and two negative eigenvalues.
  In particular: $L_{16}$ is a saddle point for $\Ecal(1,L)$ on the manifold of lattices with point density $3^4$.
  
\end{proposition}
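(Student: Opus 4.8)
The plan is to apply Theorem \ref{thm:maintheorem} directly, since $L_{16}$ is an even $3$-modular lattice whose shells are at least $3$-designs (hence $2$-designs), which is exactly the hypothesis needed. The first task is to verify the two technical assumptions of that theorem for this specific lattice. Here $e = \dim(\Scal_{8+4}(\Gamma_0(3),\chi)) = \dim(\Scal_{12}(\Gamma_0(3)))$ (the character $\chi$ is trivial since $n = 16 \equiv 0 \bmod 4$), and I would compute this dimension and confirm that a basis $C_1,\ldots,C_e$ in reduced row echelon form exists, as required by Assumption \ref{assumption}. Second, I would exhibit the quadratic forms $Q_2, Q_4, \ldots, Q_{2e}$ from \eqref{eq:quadraticforms} explicitly, using the known set of minimal vectors and subsequent shell vectors of $L_{16}$ (available from the catalogue \cite{Nebe}), and check by direct computation that they are simultaneously diagonalizable. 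Once these two checks pass, Theorem \ref{thm:maintheorem} yields the eigenvalues of the Hessian as the expression \eqref{eq:hessian:eigenvalues}, one for each common eigenspace $E_1,\ldots,E_s$.

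The claim that there are \emph{at most three distinct} eigenvalues for every $\alpha > 0$ should follow from the structure of the simultaneous eigenspace decomposition: I expect that the orthogonal group $\ort(L_{16})$ acts on the space of traceless symmetric matrices decomposing it into a small number of common eigenspaces for all the $Q_{2i}$, and that this number of distinct eigenvalue-tuples $(\lambda_{i,k})_i$ is three. Crucially, \eqref{eq:hessian:eigenvalues} shows that the eigenvalue associated to $E_k$ depends on $k$ only through the numbers $\lambda_{i,k}$; hence two eigenspaces yielding the same tuple of $Q_{2i}$-eigenvalues give identical Hessian eigenvalues for \emph{all} $\alpha$. So it suffices to show the simultaneous diagonalization produces exactly three distinct eigenvalue-tuples, which I would read off from the explicit Gram matrices of the $B_{Q_{2m}}$ computed above. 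This part is essentially bookkeeping once the forms are in hand.

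The genuinely substantive part is the rigorous sign determination at $\alpha = 1$. For each of the three eigenvalue-tuples I must show the resulting infinite series \eqref{eq:hessian:eigenvalues} is positive in one case and negative in the other two. The series has a leading dominant term coming from the minimal shell ($m_0 = 2$ here, since $a_2 = 720$ is the first nonzero coefficient), whose sign is governed by $\lambda_{m_0,k}$ as in the proof of Theorem \ref{thm:large-alpha}; but at the fixed moderate value $\alpha = 1$ I cannot simply invoke the large-$\alpha$ asymptotics, so I must control the full tail. The plan is to compute the first several terms ($m = 2, 3, 4, \ldots$ up to some cutoff $M$) exactly from the known shell data and eigenvalue-tuples, and then bound the tail $\sum_{m > M}$ rigorously. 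For the tail bound I would split $\Theta_{L_{16}}$ into its Eisenstein and cusp parts via \eqref{eq:modularform:decomposition} and apply the explicit coefficient estimates from Section \ref{sec:bounds}: the Eisenstein coefficient bound \eqref{eq:coeff:bound:Eisenstein} and the Hecke/Deligne bound \eqref{eq:deligne} for the cusp contributions, both of which are polynomial in $m$ and are therefore dwarfed by the factor $e^{-2\alpha m} = e^{-2m}$.

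The main obstacle I anticipate is making the tail estimate tight enough to certify the correct sign. The dangerous case is a tuple where the dominant $m = m_0$ term is small or where the leading eigenvalue $\lambda_{m_0,k}$ is close to the threshold; there the partial sum and the tail bound must be balanced carefully so that the rigorous inequality still goes the right way at $\alpha = 1$. Concretely, I expect one eigenvalue to be comfortably positive and two comfortably negative, so a cutoff $M$ of modest size (say up to $m = 5$ or $6$, for which the shell sizes $a_m$ are tabulated) together with the polynomial-times-$e^{-2m}$ tail majorant should close the argument; but verifying that the accumulated tail error does not overturn a near-zero partial sum is the step that demands the most care, and it is where the explicit bounds of Section \ref{sec:bounds} do the real work. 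Having established the signs — one positive, two negative — the conclusion that $L_{16}$ is a saddle point of $\Ecal(1,\cdot)$ on the density-$3^4$ manifold is immediate, since the Hessian is then indefinite.
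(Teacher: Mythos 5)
Your proposal follows essentially the same route as the paper's proof: apply Theorem~\ref{thm:maintheorem} after computationally verifying the row-echelon basis of $\Scal_{12}(\Gamma_0(3))$ and the simultaneous diagonalizability of $Q_4$ and $Q_6$ (only these matter, since the minimum of $L_{16}$ is $4$), read off the three distinct eigenvalue tuples to get the ``at most three eigenvalues'' claim, and then certify the signs at $\alpha=1$ by exact partial sums plus tail bounds obtained from the Eisenstein estimate, the Deligne/Hecke-eigenform estimate, and Lemma~\ref{lem:approx:tail}. The only quantitative caveat is that your suggested cutoff of $m=5$ or $6$ is far too small---against the factor $e^{-2m}$ the majorants grow like $m^9$, and the paper needs the first $20$ terms before the tail drops to order $10^{-5}$---but this is precisely the balancing you flag, not a change of method.
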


\begin{proof}
We firstly bound the coefficients of $\Theta_{L_{16}}$ and then use Theorem \ref{thm:maintheorem} to estimate the eigenvalues of the Hessian $\nabla^2 \Ecal(1,L_{16})$ at $L_{16}$, rigorously so for the specific energy parameter $\alpha = 1$.
For this it turns out that we need the eigenvalues of the auxiliary forms $Q_{4}$ and $Q_{6}$ associated to $L_{16}$, requiring further estimating coefficients of cups forms in $\Scal_{12}(\Gamma_0(3))$.

\subsubsection{Bounding the coefficients of $\Theta_{L_{16}}$}

We use the basis $E^{(1)}_8, E^{(2)}_8$ of $\Eis_8(\Gamma_0(3))$ given in \eqref{eq:eisenstein:basis}, while a basis of $\Scal_8(\Gamma_0(3))$ is given by 
\begin{align*}
  C_8(\tau) = \sum_{m = 0}^{\infty} c_m q^m= q + 6q^2 - 27q^3 - 92q^4 + 390 q^5 - 162 q^6 + O(q^7).
\end{align*}

Then
\begin{align*}
  \Theta_{L_{16}}(\tau) = E^{(1)}_8(\tau) + \frac{720}{123} E^{(2)}_8(\tau) - \frac{720}{123} C_8(\tau).
\end{align*}
To find an upper bound for the coefficients of $\Theta_{L_{16}}$, we will find upper bounds for the coefficients of $E^{(1)}_8$, $E^{(2)}_8$ and $C_8$.    

For the coefficients of $E^{(1)}_8,E^{(2)}_8$ we use the estimate in \eqref{eq:coeff:bound:Eisenstein}. From this we get
\begin{align*}
  |b^{(1)}|,|b^{(2)}_m| \leq \vert b_m \vert \leq 485 m^7
\end{align*}

Again, $\Scal_8(\Gamma_0(3))$ is $1$-dimensional, so $C_8(\tau) = \sum_{m=1}^\infty c_m q^m$ is a Hecke eigenform, and \eqref{eq:deligne} gives
\begin{align*}
    \vert c_m \vert &\leq 2 m^4. 
\end{align*}
Putting everything together, we find the following upper bound for the coefficients of $\Theta_{L_{16}}$:
\begin{equation} \label{eq:theta:L16}
    \vert a_m \vert \leq 3325 \cdot m^7 + 12 \cdot m^4. 
\end{equation}

\subsubsection{The auxiliary forms $Q_4$ and $Q_6$}

To apply Theorem \ref{thm:maintheorem} we observe that the dimension of $\Scal_{12}(\Gamma_0(\ell))$ is $3$.
So we need the auxiliary forms (c.f. \eqref{eq:quadraticforms})
\[
  Q_{2m} = \sum_{x \in L_{16}(2m)} H[x]^2
\]
specifically for $m=1,2,3$.

We consider the explicit basis of $\Scal_{12}(\Gamma_0(3))$ consisting of
\begin{align*}
    C^{(1)}_{12}(\tau) = \sum_{m > 0} c^{(1)}_m q^m &= q - 176q^4 + 2430q^5 - 5832q^6 + O(q^7),\\
    C^{(2)}_{12}(\tau) = \sum_{m > 0} c^{(2)}_m q^m &= q^2 + 54q^4 - 100q^5 - 243q^6 + O(q^7),\\
    C^{(3)}_{12}(\tau) = \sum_{m > 0} c^{(3)}_m q^m &= q^3 - 24q^6 + O(q^7).
\end{align*}
Note that this basis is in reduced row echelon form in agreement with \eqref{eq:row:echelon}.
However, this is not a basis of Hecke eigenforms. 
One such basis can be obtained using the \texttt{mfbasis} command in PARI/GP \cite{PARI} we compute the following basis of Hecke eigenforms
\begin{equation} \label{eq:Hecke:basis}
    \begin{aligned}
        H^{(1)}_{12}(\tau) &= \Delta(\tau) =  \sum_{m > 0} h^{(1)}_m q^m = q - 24q^2 + 252q^3 - 1472q^4 +  O(q^5),\\
        H^{(2)}_{12}(\tau) &= \Delta(3\tau) = \sum_{m > 0} h^{(2)}_m q^m = q^3 + 24q^6 + O(q^7),\\
        H^{(3)}_{12}(\tau) &= \sum_{m > 0} h^{(3)}_m q^m = q +78q^2 -243q^3 + 4036q^4 + O(q^5).
    \end{aligned}
\end{equation}
These are in fact Hecke eigenforms. 
Firstly $H^{(1)}_{12}$ and $H^{(2)}_{12}$ are expressed in terms of Ramanujan's discriminant function $\Delta$, while the third one is collected in the \emph{L-functions and modular forms database} \cite{LMFDB}, as entry \href{https://www.lmfdb.org/ModularForm/GL2/Q/holomorphic/3/12/a/a/}{Newform orbit 3.12.a.a}.

To estimate the eigenvalues of the Hessian, we need to find estimates on the coefficients of the cusp forms $C^{(1)}_{12}$, $C^{(2)}_{12}$, and $C^{(3)}_{12}$.

We express $\Theta_{L_{16},p_H}(\tau) = b_1 C^{(1)}_{12} + b_2 C^{(2)}_{12} + b_3 C^{(3)}_{12}$ in terms of this basis and obtain the coefficients
\begin{align*}
    b_i = \sum_{x\in L(2i)} H[x]^2 - \frac{2(2i)^2}{(2+n)n} \vert L(2i) \vert \Tr(H^2)
\end{align*}
as in \eqref{eq:biconstants}.
Note, however, that $b_1 = 0$ as $L_{16}$ has no elements of squared norm $2$, its minimum is $4$ as follows from its theta series \eqref{eq:theta:L16}.
Therefore, we only need to investigate $C^{(2)}_{12}$ and $C^{(3)}_{12}$ and the associated auxiliary forms $Q_4$ and $Q_6$.

We explicitly computed the matrices of the associated bilinear forms in \eqref{eq:bil-form-b_Q} for $Q_4$ and $Q_6$ and checked (numerically) that they commute, so we can diagonalize both forms simultaneously (see Section \ref{sec:simultaneous:diagonalizable} for a brief discussion of obstacles for an abstract proof of this property). 
These computations were done using a computer, and we obtained the eigenvalues (with multiplicities)
\begin{align*}
  \lambda_{2,1} = 0, \quad \lambda_{2,2} = 72, \quad \lambda_{2,3} = 144
\end{align*}
for $Q_4$ and 
\begin{align*}
    \lambda_{3,1} = 4320, \quad \lambda_{3,2} = 3456, \quad \lambda_{3,3} = 2592,
\end{align*}
for $Q_6$ and their common eigenspaces $E_1$, $E_2$, and $E_3$ respectively.

For the coefficients of $C^{(2)}_{12}$ and $C^{(3)}_{12}$ we express these cusp forms in the basis \eqref{eq:Hecke:basis} of Hecke eigenforms:
\begin{equation*}
    C^{(2)}_{12} = \frac{1}{102}(- H^{(1)}_{12} + 165 H^{(2)}_{12} + H^{(3)}_{12})
\end{equation*}
and
\begin{equation*}
    C^{(3)}_{12} = H^{(2)}_{12}.
\end{equation*}
Using \eqref{eq:deligne} for the coefficients of the Hecke eigenforms we find
\begin{align}\label{eq:coeff:cusp:L16}
    \vert c_{2,m} \vert \leq (1+165+1)\cdot 2 \cdot m^{6} = 334 \cdot m^{6} \quad \text{and} \quad \vert c_{3,m} \vert \leq 2 \cdot m^{6} 
\end{align}
for the coefficients of the cusp forms $C^{(2)}_{12}(\tau)$ and $C^{(3)}_{12}(\tau)$ respectively.
Note that here we could strengthen the bound on $C^{{3}}_{12}$, as $C^{{3}}_{12}(\tau) = H^{(2)}_{12} = \Delta(3\tau)$, where the latter is a Hecke eigenform for $\Scal_{12}(\SL_2(\Z))$, so its coefficients are bound by $d(m/3) \cdot (m/3)^{11/2} \leq 2 \cdot 3^{-6} m^6$, rather than by $2 \cdot m^{6}$. 
However, we will not make use of this strengthened version fur the current application.

\subsubsection{Eigenvalues of the Hessian of $L_{16}$}

Explicitly, by Theorem \ref{thm:maintheorem}, the eigenvalues of the Hessian are
\begin{equation}
  \begin{aligned}
    \lambda_j = \frac{1}{288}\sum_{m = 1}^{\infty} \left(\left(\sum_{i = 2}^{3} \frac{1}{2}c_{i,m}\left(288\lambda_{i,j}- 8 i^2 a_i \right)\right) + 2 a_m m(2 m - 9)\right) e^{-2 m}
  \end{aligned}
\end{equation}
corresponding to the common eigenspaces $E_1$, $E_2$, and $E_3$ of $Q_4$ and $Q_6$.
Note that indeed $Q_2 = 0$, as $L_{16}$ does not have vectors of squared length $2$, and therefore does not appear in the above computation.

Before using the previously found bounds for a rigorous verification of the fact that $L_{16}$ is a local maximum for the Gaussian potential given by $\alpha = 1$, we plot the eigenvalues of the Hessian $\nabla^2 \Ecal(\alpha,L_{16})$ at $L_{16}$.
For this we use the first 200 summands of \eqref{eq:hessian:eigenvalues}, the result is depicted in Figure \ref{fig:EigVaL3D16.3}.
\begin{figure}[htb]
  % \begin{subfigure}{0.8\linewidth}
    \centering
    \includegraphics[width=.8\linewidth]{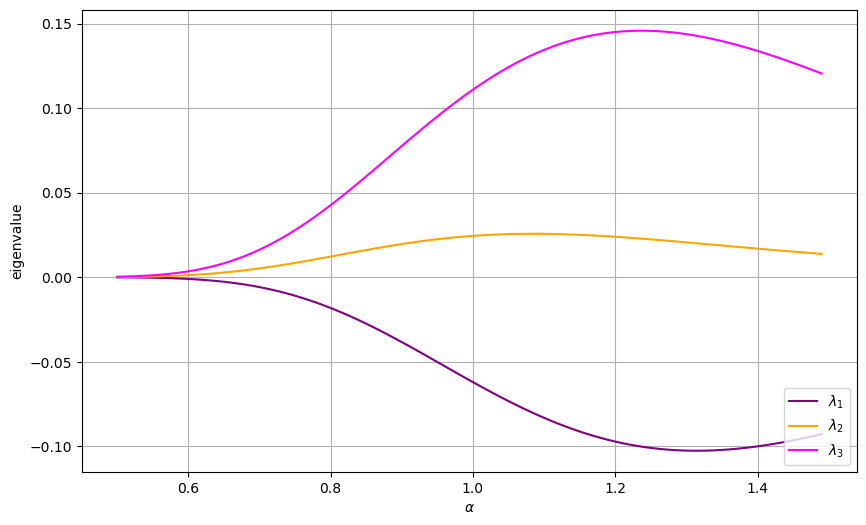}
  % \end{subfigure}
  % \begin{subfigure}{0.8\linewidth}
  %   \centering
  %   \includegraphics[width=.4\linewidth]{figures/L16plot_ca1.png}
  %   \includegraphics[width=.4\linewidth]{figures/L16plot_ca3.png}
  % \end{subfigure}    
    \caption{Eigenvalues of the Hessian for $L_{16}$}
    \label{fig:EigVaL3D16.3}
\end{figure}

We provide a rigorous estimate at $\alpha = 1$ to confirm the numerical simulation.
Specifically, we provide two estimates.
First, we show that the numerically negative eigenvalue $\lambda_1$, which is associated to the common eigenspace $E_1$ with the eigenvalues $\lambda_{2,1} = 0$ and $\lambda_{3,1} = 4320$, and printed as the violet curve, is indeed strictly negative.
Second, we show that the numerically smaller positive eigenvalue $\lambda_2$, which is associated to the common eigenspace $E_2$ with the eigenvalues $\lambda_{2,3} = 72$ and $\lambda_{3,3} = 2592$, and printed as the orange curve, is indeed strictly positive.

We first compute the sum of the first $20$ summands as an approximation for $\lambda_1$, $\lambda_2$, and $\lambda_3$:
\[
  \lambda_1 \approx -0.062,\quad \lambda_2 \approx 0.024,\quad \text{and} \quad \lambda_3 \approx 0.111.
\]
To provide a rigorous proof of negativity and positivity we now compute bounds on the tails $T_1$, $T_2$, and $T_3$, of the series defining $\lambda_1$, $\lambda_2$, and $\lambda_3$ respectively.
These bounds will show that the tail is only affecting the eigenvalue to the order of $10^{-5}$.

For $\lambda_1$, we use the explicit eigenvalues $\lambda_{2,1} = 0$ and $\lambda_{3,1} = 4320$, and obtain that
\begin{align*}
  |T_1| \leq &\sum_{m = 21}^{\infty} \left(\tfrac{1}{2}\cdot 288(0 \cdot |c_{2,m}| + 4320 \cdot |c_{3,m}|) +  (2m)^2 |a_m| \right) e^{-2 m}\\
  = &\sum_{m = 21}^{\infty} \left( 622080 \cdot \vert c_{3,m} \vert +  4 \cdot \vert a_m\vert m^2  \right)  e^{-2 m} \\
  \leq\ &1.3 \cdot 10^6 \sum_{m = 21}^{\infty} m^6 e^{-2 m} + 1.4 \cdot 10^{4} \sum_{m = 21}^{\infty} m^9e ^{-2 m}
\end{align*}
where the inequality is obtained by using the estimates on the coefficients in \eqref{eq:coeff:cusp:L16} and \eqref{eq:theta:L16}. 
Similarly we obtain 
\begin{align*}
  |T_2| \leq &\sum_{m = 21}^{\infty} \left(\tfrac{1}{2}\cdot 288(72 \cdot |c_{2,m}| + 3456 \cdot |c_{3,m}|) +  (2m)^2 |a_m| \right) e^{-2 m}\\
  % = &\sum_{m = 21}^{\infty} \left(10368 \cdot \vert c_{2,m} \vert  + 497664 \cdot \vert c_{3,m} \vert +  4 \cdot \vert a_m\vert m^2  \right)  e^{-2 m} \\
  \leq\ &4.5 \cdot 10^6 \sum_{m = 21}^{\infty} m^6 e^{-2 m} + 1.4 \cdot 10^{4} \sum_{m = 21}^{\infty} m^9e ^{-2 m}
\end{align*}
for $\lambda_2$ with $\lambda_{2,2} = 72$ and $\lambda_{3,2} = 3456$, and 
\begin{align*}
  |T_3| \leq &\sum_{m = 21}^{\infty} \left(\tfrac{1}{2}\cdot 288(144 \cdot |c_{2,m}| + 2592 \cdot |c_{3,m}|) +  (2m)^2 |a_m| \right) e^{-2 m}\\
  % = &\sum_{m = 21}^{\infty} \left(20736 \cdot \vert c_{2,m} \vert  + 373248 \cdot \vert c_{3,m} \vert +  4 \cdot \vert a_m\vert m^2  \right)  e^{-2 m} \\
  \leq\ &7.7 \cdot 10^6 \sum_{m = 21}^{\infty} m^6 e^{-2 m} + 1.4 \cdot 10^{4} \sum_{m = 21}^{\infty} m^9e ^{-2 m}
\end{align*}
for $\lambda_2$ with $\lambda_{2,3} = 144$ and $\lambda_{3,3} = 2592$.\smallskip

To conclude the estimation on the tails, we use the following lemma (Lemma 2.2 in \cite{Heimendahl2023}), which provides an analytic expression for the remaining infinite series.
\begin{lemma}\label{lem:approx:tail}
    For $j \geq \frac{k}{2\alpha}$, we have
    \begin{align*}
        \sum_{m=j}^{\infty} m^k e^{-2\alpha m} \leq j^k e^{-2\alpha j} + (2\alpha)^{-(k+1)} \Gamma(k+1, 2\alpha j),
    \end{align*}
    where
    \begin{align*}
        \Gamma(s, x) = \int_{x}^{\infty} t^{s-1} e^{-t} \, dt
    \end{align*}
    is the incomplete gamma function.
\end{lemma}

Applying Lemma \ref{lem:approx:tail} to the series in $T_1$, $T_2$, and $T_3$ yields
\begin{align*}
    &\sum_{m = 21}^{\infty} m^6 e^{-2 m} \leq 7.8 \cdot 10^{-11}, \\
    &\sum_{m = 21}^{\infty} m^9e ^{-2 m} \leq 7.5 \cdot 10^{-7}.
\end{align*}
With this we obtain the final bounds
\begin{align*}
    |T_1| \leq &\frac{1}{288}(1.3 \cdot 10^{6} \cdot 7.8 \cdot 10^{-11} + 1.4 \cdot 10^4 \cdot 7.5 \cdot 10^{-7}) \\
    \leq\ &3.7 \cdot 10^{-5} < 0.062,
\end{align*}
and analogously
\begin{align*}
    |T_2| \leq 3.8 \cdot 10^{-5} < 0.024 \quad \text{and} \quad |T_3| \leq 3.9 \cdot 10^{-5} < 0.111.
\end{align*}
This concludes the proof.
\end{proof}

\subsection{Extremal modular lattices which are not universally critical} \label{sec:noncritical}

While any extremal modular lattice on which all shells are spherical designs of strength at least $2$ is critical for the energy function $\Ecal(\alpha,L)$ for arbitrary $\alpha$ this is no longer true once the shells do not have the desired design strength. 
One explicit example among even unimodular lattices was already discussed in \cite{Heimendahl2023}, where the authors investigated a particular non-extremal even unimodular lattice in dimension $32$.

Here we will  provide an explicit example of an extremal $2$-modular lattice in dimension $12$ which is not critical for energy.
Note that the strategy we describe, utilizing the fact that the shells do not form spherical $2$-designs, can be applied to other lattices.

For the current purpose we choose an extremal $2$-modular lattice $L_{12}$ in dimension $12$ with theta series
\begin{equation} \label{eq:Lnoncritical:theta}
  \Theta_{L_{12}}(\tau) = 1 + 72q + 1800q^2 + 17468q^3 + 0(q^4).
\end{equation}
An explicit realization of such a lattice can be found in \cite{Nebe}, more precisely as the third entry in the file ``\texttt{2\_dim12.dat}''.

Note that, up to isometry, there are two more such lattices.
For the first such lattice, the first entry in ``\texttt{2\_dim12.dat}'' in \cite{Nebe} all shells form spherical $3$-designs; to test this one can use the design test based on harmonic Molien series as in Goethals and Seidel \cite{Goethals1981}, which we explained in Section \ref{sec:spherical:shells}.
So this lattice \emph{is} a universal critical point.

For the remaining lattice, the second entry in the above-mentioned file, we can (numerically) observe that it is not critical, the same method as illustrated below can in principle be used to give a rigorous proof, if desired.

\begin{proposition} \label{prop:noncrit}
  For $\alpha = 10$ the lattice $L_{12}$ is not a critical point for $\Ecal(10,L)$.
\end{proposition}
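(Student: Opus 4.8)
The plan is to exploit the gradient formula \eqref{eq:gradient} directly, without passing to the Hessian. Grouping the sum over shells exactly as in the gradient discussion preceding \eqref{eq:spherical-2-design}, the gradient of $\Ecal(10,L_{12})$, viewed as a linear functional on the space of traceless symmetric matrices, is
\[
\langle \nabla \Ecal(10,L_{12}), H\rangle = -10\,\langle H, G\rangle, \qquad G = \sum_{m\ge 1} e^{-20 m}\, M_{2m}, \quad M_{2m} = \sum_{x\in L_{12}(2m)} x x^{\sf T}.
\]
Since $H$ ranges over all traceless symmetric matrices, the lattice $L_{12}$ is critical if and only if $G$ is a scalar multiple of the identity, i.e.\ if and only if the shell matrices $M_{2m}$, weighted by $e^{-20m}$, have no traceless component. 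I would therefore exhibit a single traceless $H$ with $\langle H, G\rangle \neq 0$.

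The natural candidate is the traceless part of the first shell matrix, $H_0 = M_2 - \tfrac{1}{12}(\Tr M_2)\, I_{12}$. The first shell $L_{12}(2)$ consists of the $72$ minimal vectors of squared norm $2$ recorded by \eqref{eq:Lnoncritical:theta}, and I would compute $M_2$ explicitly from the catalogue coordinates of these vectors in \cite{Nebe} and verify by computer that $H_0 \neq 0$, i.e.\ that this shell fails to be a spherical $2$-design in the sense of \eqref{eq:spherical-2-design}. This is exactly where the hypothesis that the shells are not all $2$-designs enters. With this choice the $m=1$ term is, using $\Tr H_0 = 0$,
\[
e^{-20}\,\langle H_0, M_2\rangle = e^{-20}\,\langle H_0, H_0\rangle = e^{-20}\,\|H_0\|_F^2 > 0.
\]

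It then remains to show this positive main term dominates the tail $\sum_{m\ge 2} e^{-20m}\langle H_0, M_{2m}\rangle$. By Cauchy--Schwarz and the crude per-entry bound $|(M_{2m})_{ij}| \le \sum_{x\in L_{12}(2m)} \tfrac{1}{2}\|x\|^2 = m\,a_m$, giving $\|M_{2m}\|_F \le 12\,m\,a_m$, the tail is at most $12\,\|H_0\|_F\sum_{m\ge 2} m\,a_m\,e^{-20m}$. The coefficients $a_m=|L_{12}(2m)|$ are the Fourier coefficients of $\Theta_{L_{12}}\in\Mcal_6(\Gamma_0(2))$ (the character is trivial since $12\equiv 0\bmod 4$), so the decomposition \eqref{eq:modularform:decomposition} together with the bounds \eqref{eq:coeff:bound:Eisenstein} and \eqref{eq:deligne} of Section \ref{sec:bounds} yields $a_m = O(m^5)$; a rigorous envelope for the resulting series is supplied by Lemma \ref{lem:approx:tail}. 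After factoring out $e^{-20}$, positivity of $\langle H_0, G\rangle$ reduces to the inequality $\|H_0\|_F > 12\sum_{m\ge 2} m\,a_m\,e^{-20(m-1)}$, whose right-hand side is dominated by its $m=2$ term $12\cdot 2\cdot 1800\cdot e^{-20}$ and is below $10^{-3}$, while $\|H_0\|_F$ is of order one. Hence $\langle H_0, G\rangle > 0$, the gradient does not vanish, and $L_{12}$ is not critical for $\Ecal(10,L)$.

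The only genuinely non-routine ingredient is the explicit determination of $M_2$ and the verification that $H_0 \neq 0$ — equivalently, that the minimal vectors of this particular extremal $2$-modular lattice do not form a spherical $2$-design — which I would settle by computer from the catalogue data. Everything else is a soft large-$\alpha$ estimate: the choice $\alpha = 10$ separates consecutive shells by the enormous factor $e^{-20}$, so even the crudest per-entry bound on $M_{2m}$ leaves an ample margin and no delicate cancellation has to be controlled. Should the first shell happen to be a $2$-design, the identical argument applies verbatim to the smallest index $m$ for which $M_{2m}$ is non-scalar, with main term $e^{-20m}\|H_0\|_F^2$ still dominating the tail from index $m+1$ onward.
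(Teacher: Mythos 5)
Your proposal is correct and follows essentially the same route as the paper: both test the gradient against a traceless matrix built from the first shell (your $H_0$ is just $-8$ times the paper's explicit choice $H = \diag(-1,1,-1,-1,-1,-1,1,-1,1,1,1,1)$, since the paper computes $M_2 = \diag(20,4,20,20,20,20,4,20,4,4,4,4)$), and both dominate the tail using the Eisenstein-series coefficient bound $|a_m| \le 38179\, m^5$ together with Lemma \ref{lem:approx:tail}. The only cosmetic differences are that you bound the tail via Cauchy--Schwarz on Frobenius norms where the paper uses the Rayleigh--Ritz inequality $|H[x]| \le \lambda_{\max}(H)\|x\|^2$, and you phrase the test matrix as the traceless projection of $M_2$ rather than writing it down explicitly.
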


\begin{proof}
We recall that the gradient of energy at $L_{12}$ is given by 
\begin{align}\label{eq:gradient:2}
    \langle \nabla \Ecal(\alpha,L_{12}), H\rangle = - \alpha \sum_{m > 0} e^{-\alpha m} \sum_{x \in L(2m)} H[x].
\end{align}
We construct a symmetric $n\times n$ matrix $H$ with trace zero, such that 
\begin{align*}
    \langle \nabla \Ecal(\alpha,L_{12}), H\rangle \neq 0
\end{align*}
for some $\alpha$ (we will explicitly choose $\alpha = 10$ for a rigorous estimate).
For this we split \eqref{eq:gradient:2} into contributions of the shell $L_{12}(2)$ and the remainder:
\begin{align}\label{noDesignGradient}
    \langle \nabla \Ecal(\alpha,L_{12}), H\rangle = - \alpha e^{-2\alpha} \left( \sum_{x \in L_{12}(2)} H[x]\right) - \alpha \left( \sum_{m \geq 2} \sum_{x \in L_{12}(2m)} H[x] e^{-\alpha m}\right).
\end{align}
Specifically at $L_{12}$ we evaluate
\begin{align}\label{noDesignL2Sum}
    \sum_{x \in L_{12}(2)} H[x] = \left\langle H, \sum_{x \in L_{12}(2)} x x^{\sf{T}} \right\rangle
\end{align}
with
\begin{align*}
    \sum_{x \in L(2)} x x^{\sf{T}}  = \diag(20,4,20,20,20,20,4,20,4,4,4,4).
\end{align*}
This shows in particular that the shells of $L_{12}$ do not form spherical $2$-designs (c.f \eqref{eq:spherical-2-design}).
From this information we make the Ansatz to choose the symmetric $12 \times 12$ matrix 
\begin{align} \label{noDesignH}
    H = \diag(-1,1,-1,-1,-1,-1,1,-1,1,1,1,1)
\end{align}
with trace zero. 
Then
\begin{align*}
    \sum_{x \in L_{12}(2)} H[x] = - 6 \cdot 20 + 6 \cdot 4 = - 96 \neq 0.
\end{align*}
We now use the Rayleigh-Ritz principle, which provides a relation between $H[x]$, $\|x\|^2$, and the eigenvalues of $H$ by
\begin{align*}
    -1 = \lambda_{\min}(H) \leq \frac{H[x]}{\|x\|^2}\leq \lambda_{\max}(H) = 1.
\end{align*}
With this we bound the tail in \eqref{noDesignGradient}. 

Let $a_m = \vert L_{12}(2m)\vert$ be the $m$-th coefficient of the theta series $\Theta_{L_{12}}$. 
Then 
\begin{align*}
    -\sum_{m \geq 2} a_m \cdot 2m \cdot e^{-2\alpha m} \leq \sum_{m \geq 2} \sum_{x \in L_{12}(2m)} H[x] e^{-2\alpha m} \leq \sum_{m \geq 2} a_m \cdot 2m \cdot e^{-2\alpha m}.
\end{align*}
Now it suffices to show 
\begin{align}\label{noDesignSuffice}
    \sum_{m \geq 2} a_m \cdot 2m \cdot e^{-2\alpha m} < 96\cdot e^{-2\alpha}.
\end{align}
This follows the same lines as the computation in Section \ref{sec:2design}.
Note that $\Theta_{L_{12}}$ is a modular form of weight 6 for $\Gamma_0(2)$. 
The space $\Mcal_6(\Gamma_0(2))$ contains no cusp forms and is therefore equal to its two-dimensional subspace of Eisenstein series $\Eis_6(\Gamma_0(2))$. 

We use the basis $E^{(1)}_6, E^{(2)}_6$ of $\Eis_6(\Gamma_0(2))$ given in \eqref{eq:eisenstein:basis}.
Then
\begin{align*}
  \Theta_{L_12}(\tau) = E^{(1)}_6(\tau) + 72 E^{(2)}_6(\tau).
\end{align*}
To find an upper bound for the coefficients of $\Theta_{L_{12}}$, we use the estimate in \eqref{eq:coeff:bound:Eisenstein} for the coefficients of $E^{(1)}_6$ and $E^{(2)}_6$.
From this we get 
\begin{align*}
  |b^{(1)}_m|,|b^{(2)}_m| \leq 523 m^5,
\end{align*}
and so the coefficients $a_m$ are bounded by
\begin{align*}
    \vert a_m \vert \leq (1+72)\cdot 523 \cdot m^5 = 38179 \cdot m^5.
\end{align*}
Now we explicitly choose $\alpha = 10$ to give a rigorous argument. 
Evaluating the left-hand side of (\ref{noDesignSuffice}) for $\alpha = 10$ results in
\begin{align*}
    \sum_{m \geq 2} a_m \cdot 2m \cdot e^{-20 m} \leq 76358 \sum_{m \geq 2} m^6 \cdot e^{-20m}.
\end{align*}
Applying Lemma \ref{lem:approx:tail} yields 
\begin{align*}
    \sum_{m \geq 2} m^6 \cdot e^{-20m} \leq 3 \cdot 10^{-16},
\end{align*}
and together we get 
\begin{equation*}
    \sum_{m \geq 2} a_m \cdot 2m \cdot e^{-20 m} \leq 76358 \cdot 3 \cdot 10^{-16} < 96 \cdot e^{-20}.
\end{equation*}
We have thus shown that $\langle \nabla \Ecal(10,L_{12}), H\rangle \neq 0$ for $H$ as in (\ref{noDesignH}), proving that the gradient $\nabla \Ecal(10,L_{12})$ of the energy function is not zero for the lattice $L_{12}$, and it is therefore not a critical point for $\alpha = 10$.
\end{proof}

\subsection{Numerical considerations} \label{sec:numerical}
While we only discussed a number of examples in detail, we performed a numerical investigation of all extremal $2$-modular lattices up to dimension $20$ and $3$-modular lattices up to dimension $18$.
We briefly collect the qualitative behavior of these lattices (in the space of lattices of fixed density) in the Gaussian core model, the techniques used in the preceding examples can be used to rigorously verify these observations, if necessary.

The relevant data is presented in Tables \ref{tab:numerics:modular:lattices:2} and \ref{tab:numerics:modular:lattices:3} for $2$ and $3$-modular lattices separately.
We collect for each dimension how such lattices behave in the Gaussian core model (in the space of lattices of fixed density) according to our numerical computations, and give a reference to a (partial) proof of the claims, if it is available.
If there are multiple isometry classes of such lattices in a given dimension it is usually the case that the number of distinct eigenvalues of the Hessian depends on the lattice. 

In some cases, $\ell$-modular lattices of the same dimension behave differently:
We observed this in dimension $12$ for $2$-modular lattices, and in dimensions $8$ and $10$ for $3$-modular lattices. 
Here, the minimal design strength of the shells, as recorded in Tables \ref{tab:modular:lattices:2} and \ref{tab:modular:lattices:3}, is not big enough to guarantee that we have at least $2$-designs on every shell.
We therefore investigated the design strength in a case by case study, by computing the harmonic Molien series in every case, and using Theorem 3.12 in \cite{Goethals1981}, as explained in Section \ref{sec:spherical:shells}.
It turns out, that in all three cases, precisely one lattice is universally critical, the corresponding shells are always at least $3$-designs, while the rest are not universally critical, the design strength of some shells is $1$.
In the table we use the notation $d.i$ to refer to the $i$th lattice in dimension $d$ as given in the database \cite{Nebe}.

\begin{table}\renewcommand{\arraystretch}{1.1}
  \begin{tabular}{c c c c}\hline
    dimension & lattices & behavior & reference \\ \hline \rule{0pt}{4mm}
    $4$ & all & loc. univ. opt. & \cite{Sarnak2006a}, Theorem \ref{thm:ellmodular:loc}\\
    $8$ & all & univ. saddle point & --- \\
    $12$ & 12.1 & univ. saddle point & --- \\
     & 12.2, 12.3 & not univ. critical & Proposition \ref{prop:noncrit} \\
    $16$ & all & loc. univ. opt. & Proposition \ref{prop:BW}\\
    $20$ & all & loc. univ. opt. & Theorem \ref{thm:ellmodular:loc}\\ \hline
  \end{tabular}
  \medskip
  \caption{Numerical data on the behavior of $2$-modular extremal lattices in small dimensions.}
  \label{tab:numerics:modular:lattices:2}
\end{table}

\begin{table}\renewcommand{\arraystretch}{1.1}
  \begin{tabular}{c c c c}\hline
    dimension & lattices & behavior & reference \\ \hline \rule{0pt}{4mm}%
    $2$ & all & (loc.) univ. opt. & \cite{Montgomery1988}, Theorem \ref{thm:ellmodular:loc} \\
    $4$ & all & univ. saddle point & --- \\
    $6$ & all & univ. saddle point & --- \\
    $8$ & 8.1 & univ. saddle point & --- \\
     & 8.2 & not univ. critical & ---  \\
    $10$ & 10.1 & univ. saddle point & --- \\
     & 10.2, 10.3 & not univ. critical & ---  \\    
    $12$ & all & loc. univ. opt. & Proposition \ref{prop:CT} \\
    $14$ & all & loc. univ. opt. & Theorem \ref{thm:ellmodular:loc} \\
    $16$ & all & univ. saddle point & --- \\
    $18$ & all & univ. saddle point & --- \\\hline
  \end{tabular}
  \medskip
  \caption{Numerical data on the behavior of $3$-modular extremal lattices in small dimensions.}
  \label{tab:numerics:modular:lattices:3}
\end{table}

\subsection{A brief remark on simultaneous diagonalizability}\label{sec:simultaneous:diagonalizable}

A question of interest to us is the simultaneous diagonalizability of the quadratic forms $Q_1,\ldots,Q_{2e}$ in Theorem \ref{thm:maintheorem}. 
In all explicitly tested examples these forms satisfied this assumption, and indeed from this it follows by \eqref{eq:formelsummehx2} that then all \emph{shell forms} $Q_{2m}$ for $m \geq 0$ (c.f. \eqref{eq:crucial-quadratic-form}) are simultaneously diagonalizable.

Such $Q$ relates to the bilinear form $ b_Q $, defined in \eqref{eq:bil-form-b_Q}, which is invariant under the action of $\ort(L)$, that is $ b_Q(U G U^{\sf T},U H U^{\sf T}) = b_Q(G,H) $ for all $ S \in \ort(L) $.
The forms $Q_{1},\ldots,Q_{2e}$ are simultaneously diagonalizable if and only if the Gram matrices of these associated bilinear forms commute.

The isometry group $\ort(L)$ of the lattice $L$ acts on the space of symmetric matrices $\mathcal{S}^n$ by conjugation
\begin{align*}
	\ort(L) \times \mathcal{S}^n &\to \mathcal{S}^n   \\
	(U,H)  &\mapsto  U H U^{\sf T}.
\end{align*}
This turns $ (\mathcal{S}^n, \langle \cdot, \cdot \rangle )$ into a unitary representation of $ \ort(L) $, meaning that the action of $ \ort(L) $ preserves the inner product $ \langle \cdot , \cdot \rangle $.
By virtue of the Riesz representation theorem, there is a linear map $T : \mathcal{S}^n \to \mathcal{S}^n$ such that
\[
	b_Q(G,H) = \langle G, T(H) \rangle
\]
and the eigenvalues of the Gram matrix of $b_Q$ coincide with the eigenvalues of $T$.
Since $ b_Q $ is invariant under the action of $ \ort(L) $, the map $ T $ commutes with the action of $ \ort(L)$, i.e.
\begin{equation*}
\label{eq:commutativity-property-of-E}
T(U H U^{\sf T}) = U T(H)U^{\sf T} \quad   \textup{ for all }    U \in \ort(L),
\end{equation*}
hence, $ T $ is an $\ort(L)$-equivariant endomorphism of $\mathcal{S}^n$. In other words, $T$ is intertwining the representation
$ (\mathcal{S}^n, \langle \cdot, \cdot \rangle ) $ of the group $ \ort(L) $ with itself.
In this way we have a $1$-$1$-correspondence between $\ort(L)$-invariant bilinear forms $Q$ on $\mathcal{S}^n$ and $\ort(L)$-equivariant endomorphisms of $\mathcal{S}^n$.

Now it can be checked directly (say with the use of a computer algebra system) that in general this representation of $\ort(L)$ on $\Scal^n$ contains an irreducible component with multiplicity larger than $1$. 
By this it follows that there are $\ort(L)$-invariant bilinear forms which are not simultaneously diagonalizable.
So the shell forms $Q_{2m}$ we are interested in seem to be rather special. 
In order to understand them better it would be nice to identify a corresponding commutative subalgebra of $\ort(L)$-equivariant endomorphisms of the $\ort(L)$-representation $\Scal^n$.

\section{Outlook} \label{sec:outlook}
The strategy described in Section \ref{sec:Strategy} and carried out for some examples in Section \ref{sec:results} can likely be applied to other extremal (strongly) $\ell$-modular lattices as discussed in \cite{Quebbemann1995}, \cite{Quebbemann1997}, and \cite{Scharlau1999}.
However, some adjustments have to be made:
Firstly, one needs explicit information about the spaces $\Scal_{n/2+4}(\Gamma_0(\ell,\chi))$, in particular if $\chi$ is non-trivial, to write down the data needed for an application of Theorem \ref{thm:maintheorem} (or a variant of it).
This includes the dimension of this space and a basis as in \eqref{eq:row:echelon}, or it would involve more complicated computations following \eqref{eq:thetaLph4}.
Next, one needs to carry out computations to bound the coefficients of the cusp forms in $\Scal_{k}(\Gamma_0(\ell),\chi)$, for which one can use the method described in Section \ref{sec:cuspform:coeff}, if one can find a decomposition into Hecke eigenforms.
Analogous bounds for larger $\ell$ and characters $\chi$ would allow for rigorous computations as presented throughout Section \ref{sec:results}.

\section*{Acknowledgements}
We thank Matthew de Courcy-Ireland and the anonymous referee for pointing out how to bound the coefficients of cusp forms using the bound on Hecke eigenforms coming from Deligne's theorem, improving the resulting bounds by certain orders of magnitude compared to the previously used global bounds.

We further thank the anonymous referee for the helpful comments, suggestions, and corrections on the manuscript.

We thank Henry Cohn for pointing out an error in an earlier version.

\end{document}